\newenvironment{enumerateII}
{\setlength{\leftmargini}{2.5em}
\begin{enumerate}[align=left]
  \setlength{\labelwidth}{1.5em}
  \setlength{\labelsep}{1.0em} 
  \setlength{\itemsep}{-0pt}

  }
{\end{enumerate}}
\def\R{\mathbb{R}}
\def\N{\mathbb{N}}
\def\P{\mathbb{P}}
\def\E{\mathbb{E}}
\newcommand{\be}{\begin{equation}}
\newcommand{\ee}{\end{equation}}
\newcommand{\bea}{\begin{eqnarray}}
\newcommand{\eea}{\end{eqnarray}}
\newcommand{\beann}{\begin{eqnarray*}}
\newcommand{\eeann}{\end{eqnarray*}}
\newcommand{\benn}{\begin{equation*}}
\newcommand{\eenn}{\end{equation*}}
\def\ra{\rightarrow}
\def\I{\infty}
\newcommand{\cB}{{\mathcal B}}  % calligraphic B
\newcommand{\cD}{{\mathcal D}}  % calligraphic D
\newcommand{\cF}{{\mathcal F}}  % calligraphic F
\newcommand{\cL}{{\mathcal L}}  % calligraphic L
\newcommand{\cO}{{\mathcal O}}  % calligraphic O
\newcommand{\cW}{{\mathcal W}}  % calligraphic W
\newcommand{\cX}{{\mathcal X}}  % calligraphic X
\newcommand{\e}{\textnormal e}  % normal e (exponential)
\newcommand{\opnorm}{\ensuremath{|\hspace{-0.75pt}|\hspace{-0.75pt}|}} % Operatornorm
\newcommand{\dom}{\mathcal{B}}
\spnewtheorem{example2}{Example}{\bf}{\it}
\numberwithin{theorem}{section}
\numberwithin{corollary}{section}
\numberwithin{remark}{section}
\numberwithin{proposition}{section}
\numberwithin{lemma}{section}
\numberwithin{example2}{section}
\title{Large Deviations for Nonlocal Stochastic Neural Fields%\thanks{Grants or other notes
%about the article that should go on the front page should be
%placed here. General acknowledgments should be placed at the end of the article.}
}
\titlerunning{Large Deviations for Nonlocal Stochastic Neural Fields}        % if too long for running head
\author{Christian Kuehn \and Martin G.~Riedler}
\institute{C.~Kuehn \at
              Vienna University of Technology \\
	      	  Institute for Analysis and Scientific Computing\\
              1040 Vienna, Austria\\
              \email{ck274@cornell.edu}           %  \\
%             \emph{Present address:} of F. Author  %  if needed
           \and
              M.~G.~Riedler \at
              Johannes Kepler Universit\"at \\
	      	  Institute for Stochastics\\
		      Altenbergerstra{\ss}e 69, 1040 Linz, Austria\\
              \email{martin.riedler@jku.at} 
}
\date{Received: date / Accepted: date}
\begin{document}

\maketitle

\begin{abstract}
We study the effect of additive noise on integro-differential neural field equations. In particular, we analyze an Amari-type model driven by a $Q$-Wiener process and focus on noise-induced transitions and escape. We argue that proving a sharp Kramers' law for neural fields poses substantial difficulties but that one may transfer techniques from stochastic partial differential equations to establish a large deviation principle (LDP). Then we demonstrate that an efficient finite-dimensional approximation of the stochastic neural field equation can be achieved using a Galerkin method and that the resulting finite-dimensional rate function for the LDP can have a multi-scale structure in certain cases. These results 
form the starting point for an efficient practical computation of the LDP. Our approach also provides the technical basis for further rigorous study of noise-induced transitions in neural fields based on Galerkin approximations.
\keywords{Stochastic neural field equations\and nonlocal equations \and Large Deviation Principle \and 
Galerkin Approximation}
% \PACS{PACS code1 \and PACS code2 \and more}
\subclass{60F10 \and 60H15 \and 65M60 \and 92C20}
\end{abstract}

%%%%%%%%%%%%%%%%%%%%%%%%%%%%%%%%%%%%%%%%
\section{Introduction}  
\label{sec:intro}

Starting from the classical works of Wilson/Cowan \cite{WilsonCowan} and Amari \cite{Amari} there has
been considerable interest in the analysis of spatio-temporal dynamics of meso-scale models of neural
activity. Continuum models for neural fields often take the form of nonlinear integro-differential 
equations where the integral term can be viewed as a nonlocal interaction term; see \cite{GerstnerKistler}
for a derivation of neural field models. Stationary states, travelling waves and pattern formation for neural fields
have been studied extensively see, {e.g.}, \cite{Coombes,ErmentroutMcLeod} or the recent review by Bressloff 
\cite{BressloffReview} and references therein. 

In this paper we are going to study a stochastic neural field model. There are several motivations for
our approach. In general, it is well-known that intra- and inter-neuron \cite{DestexheRudolph-Lilith} 
dynamics are subject to fluctuations. Many meso- or macro-scale continuum models
have stochastic perturbations due to finite-size effects \cite{GinzburgSompolinsky,SoulaChow}. 
Therefore, there is certainly a genuine need to develop new techniques to analyze random neural systems
\cite{LaingLord}. For stochastic neural fields there is also the direct motivation to understand
the relation between noise and short-term working memory \cite{LaingTroyGutkinErmentrout} as well as 
noise-induced phenomena \cite{Moreno-BoteRinzelRubin} in perceptual bistability \cite{vanEe}. Although 
an eventual goal is to match results from stochastic neural fields to actual cortex data \cite{FunahashiBruceGoldman-Rakic}
we shall not attempt such a comparison here. However, the techniques we develop could have
 the potential to make it easier to understand the relation between models and experiments; see Section 
 \ref{sec:discussion} for a more detailed discussion.

There is a relatively small amount of fairly recent work on stochastic neural fields 
which we briefly review here. Brackley and Turner \cite{BrackleyTurner} study a neural field with 
a gain function which has a random firing threshold. Fluctuating gain functions 
are also considered by Coombes et {al.}~\cite{Coombesetal}. Bressloff and Webber \cite{BressloffWebber} 
analyze a stochastic neural field equation with multiplicative noise while Bressloff and Wilkinson 
\cite{BressloffWilkerson} study the influence of extrinsic noise on neural fields. In all these
works the focus is on the statistics of travelling waves such as front diffusion and the effects
of noise on the wave speed. Hutt et {al.}~\cite{HuttLongtinSchimansky-Geier} study the influence
of external fluctuations on Turing bifurcation in neural fields. Kilpatrick and Ermentrout 
\cite{KilpatrickErmentrout} are interested in stationary bump solutions. They observe 
numerically a noise-induced passage to extinction as well as noise-induced switching of bump 
solutions and conjecture that ``a Kramers' escape rate calculation'' \cite[p.16]{KilpatrickErmentrout}
could be applied to stochastic neural fields but they do not carry out this calculation.  In particular,
the question is whether one can give a precise estimate of the mean transition time between metastable
states for stochastic neural field equations; for a precise statement of the classical Kramers' law see Section 
\ref{sec:LDP}, equation \eqref{eq:Kramers}. However, to the best of our knowledge there seems to be no 
general Kramers' law or large deviation principle (LDP) calculation available for continuum neural 
field models although large deviations have been of recent interest in neuroscience applications 
\cite{Bressloff2,FaugerasMacLaurin}. It is one of the main goals of this paper to provide the basic 
steps towards a general theory.

Although Kramers' law \cite{Berglund3} and LDPs \cite{FreidlinWentzell,DemboZeitouni} are 
well understood for finite-dimensional stochastic differential equations (SDEs), the work for
infinite-dimensional evolution equations is much more recent. In particular, it has been
shown very recently that one may extend Kramers' law to certain stochastic partial differential
equations (SPDEs) \cite{Barret,BerglundGentz11,BerglundGentz10} driven by space-time white
noise. The work of Berglund and Gentz \cite{BerglundGentz10} provides a quite general strategy 
how to `lift' a finite-dimensional Kramers' law to the SPDE setting using a Galerkin approximation
due to Bl\"{o}mker and Jentzen \cite{BloemkerJentzen}. Since the transfer of PDE techniques 
to neural fields has been very successful, either directly \cite{LaingTroy} or indirectly 
\cite{CoombesOwen,BressloffReview}, one may conjecture that the same strategy also works 
for SPDEs and stochastic neural fields.\medskip

In this paper we consider a rate-based (or Amari) neural field model driven by a $Q$-Wiener 
process $W$ 
\be
\label{eq:Amari1_intro}
dU_t(x)=\left[-\alpha U_t(x)+\int_{\dom} w(x,y)f(U_t(y))dy\right]dt+\epsilon dW_t(x)
\ee
for a trace-class operator $Q$, nonlinear gain function $f$ and an interaction kernel $w$; 
the technical details and definitions are provided in Section \ref{sec:Amari}. Observe 
that \eqref{eq:Amari1_intro} is a relatively general formulation
of a nonlocal neural field. Hence we expect that the techniques developed in 
this paper carry over to much wider classes of neural fields 
beyond \eqref{eq:Amari1_intro} such as activity-based models. 

\begin{remark}
To avoid confusion we alert readers familiar with neural fields 
that the nonlinear gain function $f$ in \eqref{eq:Amari1_intro} is sometimes also
called a `rate function'. However, we reserve `rate function' for
a functional, to be denoted later by $I$, arising in the context of an LDP as this 
convention is standard in the context of LDPs.
\end{remark}

Our main goal in the study of \eqref{eq:Amari1_intro} is to provide estimates on the 
mean first passage times between metastable states. In particular, we develop the basic 
analytical tools to approximate equation \eqref{eq:Amari1_intro} as well as its rate
function using a finite-dimensional Galerkin approximation. By making the rate function
as explicit as possible we do not only provide a starting point for further analytical 
work but also provide a framework for efficient numerical methods to analyze
metastable states.\medskip

The paper is structured as follows: The motivation 
for \eqref{eq:Amari1_intro} is given in Section \ref{sec:gain_perturb} where a formal 
calculation shows that a space-time white noise perturbation of the gain function 
in a deterministic neural field leads to \eqref{eq:Amari1_intro}. In Section \ref{sec:deterministic} 
we briefly describe important features of the deterministic
dynamics for \eqref{eq:Amari1_intro} where $\epsilon=0$. In particular, we collect 
several examples from the literature where the classical Kramers' stability configuration
of bistable stationary states separated by an unstable state occurs for Amari-type
neural fields. In Section \ref{sec:LDP} we introduce the notation for Kramers' law
and LDPs and state the main theorem on finite-dimensional rate functions. In 
Section \ref{sec:gradient} we argue that a direct approach to Kramers' law via 
`lifting' for \eqref{eq:Amari1_intro} is likely to fail. Although the Amari model has a 
hidden energy-type structure we have not been able to generalize the gradient-structure 
approach for SPDEs to the stochastic Amari model. This raises doubt 
whether a Kramers' escape rate calculation can actually be carried out {i.e.}~whether 
one may express the pre-factor of the mean first-passage in the bistable 
case explicitly. Based on these considerations we restrict ourselves to just derive 
an LDP. In Section \ref{sec:direct} the LDP is established by a direct transfer of a 
result known for SPDEs. The disadvantage of this approach is that the resulting rate 
function is difficult to calculate, analytically or numerically, in practice. Therefore,
we establish in Section \ref{sec:Galerkin} the convergence of a suitable Galerkin
approximation for \eqref{eq:Amari1_intro}. Using this approximation one may apply
results about the LDP for SDEs which we carry out in Section \ref{sec:approximation}.
In this context, we also notice that the trace-class noise can induce a multi-scale 
structure of the rate function in certain cases. The 
last two observations lead to a tractable finite-dimensional approximation of the 
LDP and hence also an associated finite-dimensional approximation for first-exit time
problems. We conclude the paper in Section \ref{sec:discussion}
with implications of our work and remarks about future problems.

%%%%%%%%%%%%%%%%%%%%%%%%%%%%%%%%%%%%%%%%
\section{Amari-Type Models}
\label{sec:Amari}

In this study we consider stochastic neural field models with additive noise of the form 
\be
\label{eq:Amari1}
dU_t(x)=\left[-\alpha U_t(x)+\int_{\dom} w(x,y)f(U_t(y))dy\right]dt+\epsilon dW_t(x)
\ee
for $x\in\dom\subseteq \R^d$, a small parameter $\epsilon>0$,  and $t\geq 0$, where $\dom$ 
is bounded and closed. In
\eqref{eq:Amari1} the solution $U$ models the averaged electrical potential generated
by neurons at location $x$ in an area of the brain $\dom$. Neural field equations of
the form \eqref{eq:Amari1} are called Amari-type equations or a rate-based neural
field models. The equation is driven by an adapted space-time stochastic process $W_t(x)$
on a filtered probability space $(\Omega,\cF,(\cF_t)_{t\geq 0},\P)$. The precise
definition of the process $W$ will be given below. 

The parameter $\alpha>0$ is the decay rate for the potential, $w:\dom\times\dom\to\R$
is a kernel that models the connectivity of neurons at location $x$ to neurons at
location $y$. Positive values of $w$ model excitatory connections and negative values
model inhibitory connections. The gain function $f:\R\to\R_+$ relates the potential
of neurons to inputs into other neurons. Typically, the gain functions are chosen
sigmoidal, for  example, (up to affine transformations of the argument)
$f(u)=(1+\e^{-u})^{-1}$ or $f(u)=(\tanh(u)+1)/2$. These examples of gain functions are
bounded, infinitely often differentiable with bounded derivatives. However, throughout
the paper we only make the standing assumption that 
\begin{enumerateII}
 \item[(H1)] the gain function $f$ is globally Lipschitz continuous on $\R$.
\end{enumerateII}
We may transfer equation \eqref{eq:Amari1} into the Hilbert space setting
of infinite-dimensional stochastic evolution equations \cite{daPratoZabczyk,PrevotRoeckner}
for the Hilbert space $L^2(\dom)$. Subsequently brackets $\langle\cdot,\cdot\rangle$ always denote
the inner product on this Hilbert space. Moreover, we introduce the following notation.
Firstly, $F$ denotes the nonlinear Nemytzkii-operator defined from $f$, i.e., $F(g)(x)=f(g(x))$
for any function $g\in L^2(\dom)$. The condition (H1) implies that $F:L^2(\dom)\ra L^2(\dom)$ 
is a Lipschitz continuous operator. Often, spatially continuous solutions to \eqref{eq:Amari1} are also of interest and thus we note that the Nemytzkii-operator also preserves its Lipschitz continuity
on the Banach space $C(\dom)$ with its norm $\|g\|_0=\sup_{x\in\dom}|g(x)|$ due to $\dom$ being bounded.\footnote{We note that the boundedness assumption on the domain $\dom$ in this study is only necessary when dealing with results in the space $C(\dom)$ as is the appropriate space for the LDP results. All other results in this paper which only deal with the space $L^2(\dom)$, e.g., existence of solutions and convergence of the Galerkin approximation, are also valid for unbounded spatial domains.} Secondly, the 
linear operator $K$ is the integral operator defined by the kernel $w$
\be
\label{eq:integral_operator}
Kg(x)=\int_\dom w(x,y) g(y)\,dy \qquad\forall\,g\in L^2(\dom).
\ee
Throughout the paper we assume that
\begin{enumerateII}
 \item[(H2)] the kernel $w$ is such that $K$ is a compact, self-adjoint operator on $L^2(\dom)$.
\end{enumerateII}
We note that an integral operator is self-adjoint if and only if the kernel is symmetric, i.e.,
$w(x,y)=w(y,x)$ for all $x,y\in\dom$. A sufficient condition for the compactness of $K$ is, 
e.g., $\|w\|_{L^2(\dom\times\dom)}<\I$ in which case the operator is called a Hilbert-Schmidt 
operator. Since $\dom$ is bounded the continuity of the kernel $w$ on 
$\dom\times\dom$ implies the compactness of $K$ considered an integral operator on $C(\dom)$.\medskip

Then we re-write equation \eqref{eq:Amari1} as an Hilbert space-valued stochastic evolution equation
\be
\label{eq:Amari2}
dU_t=\left[-\alpha U_t+KF(U_t)\right]dt+\epsilon dW_t\,,
\ee
where $W$ is an $L^2(\dom)$-valued stochastic process. Interpreting the original equation in 
this form we now give a definition of the noise process assuming that
\begin{enumerateII}
 \item[(H3)] $W$ is a $Q$-Wiener process on $L^2(\dom)$, where the covariance operator $Q$ 
is a  non-negative, symmetric trace class operator on $L^2(\dom)$.
\end{enumerateII}
For a detailed explanation of a Hilbert space-valued $Q$-Wiener process and its covariance operator
we refer to, e.g., \cite{daPratoZabczyk,PrevotRoeckner}. As the operator $Q$ is non-negative, symmetric
and of trace class there exists an orthonormal basis of $L^2(\dom)$ consisting of eigenfunctions $v_i$
and corresponding non-negative real eigenvalues $\lambda_i^2$ which satisfy 
$\sum_{i=1}^\I \lambda_i^2<\infty$ . It then holds that the $Q$-Wiener process $W$ satisfies
\be
\label{eq:wiener_process}
W_t=\sum_{i=1}^{\I} \lambda_i\,\beta_t^i\,v_i\,,
\ee
where $\beta^i$ are a sequence of independent scalar Wiener processes 
(cf.~\cite[Prop 2.1.10]{PrevotRoeckner}). The series \eqref{eq:wiener_process} converges 
in the mean-square on $C([0,T],L^2(\dom))$. Furthermore, a straightforward adaptation of 
the proof of \cite[Prop 2.1.10]{PrevotRoeckner} shows that convergence in the mean-square 
also holds in the space $C([0,T],C(\dom))$ for every $T>0$ if $v_i\in C(\dom)$ for all 
$i$ (corresponding to non-zero eigenvalues) and 
$\sup_{x\in\dom}\left|\sum_{i=1}^\I \lambda_i^2v_i(x)^2\right|<\infty$.

The existence and uniqueness of mild solutions to \eqref{eq:Amari2} with trace class
noise for given initial condition $U_0\in L^2(\dom)$ is guaranteed under the Lipschitz
condition on $f$, cf.~\cite{daPratoZabczyk}, and we can write the solution
in its mild form
\be
\label{eq:mild_solution}
U_t=\e^{-\alpha t} U_0+\int_0^t \e^{-\alpha(t-s)}\,KF(U_s)\,ds +\int_0^t \e^{-\alpha(t-s)}\,dW_s.
\ee
The solution possesses a modification in $C([0,T],L^2(\dom))$ and from now on we always
identify the solution \eqref{eq:mild_solution} with its continuous modification. It is
worthwhile to note that for cylindrical Wiener processes -- and thus in particular
space-time white noise -- there does not exist a solution to \eqref{eq:Amari2}. This 
contrasts with other well-studied infinite-dimensional stochastic  evolution equations, e.g., 
the stochastic heat equation. Due to the representation of the solution \eqref{eq:mild_solution} 
it follows that a solution can only be as spatially regular as the stochastic convolution 
$\int_0^t \e^{-\alpha(t-s)}\,dW_s$. In the present case the semigroup generated by the 
linear operator is not smoothing in contrast to, e.g., the semigroup generated by the 
Laplacian in the heat equation. Thus the stochastic convolution is only as smooth as the 
noise which for space-time white noise is not even a well-defined function. To be more specific, for 
cylindrical Wiener noise the series representation  of the stochastic convolution (cf.~see equation \eqref{eq:stochastic_convolution} below) does not converge in a suitable probabilistic sense.

We next aim to strengthen the spatial regularity of the solution \eqref{eq:mild_solution}
which will be required later on. According to \cite[Thm.~7.10]{daPratoZabczyk} the 
solution \eqref{eq:mild_solution} is a continuous process taking values in the Banach 
space $C(\dom)$ if the initial condition satisfies $u_0\in C(\dom)$, the linear part 
in the drift of \eqref{eq:Amari2} generates a strongly continuous semigroup on $C(\dom)$, 
the non-linear term $KF$ is globally Lipschitz continuous on $C(\dom)$ and, finally, if 
the stochastic convolution is a continuous process taking values in $C(\dom)$. It is 
easily seen, that the first conditions are satisfied and sufficient conditions for the 
latter property are given in the following lemma. 

\begin{lemma}
\label{lemma:spatial_continuity} 
Assume that the orthonormal basis functions $v_i$ are Lipschitz continuous with 
Lipschitz constants $L_i$ such that
\be
\label{lemma:spatial_continuity_conditions}
\sup_{x\in\dom}\left|\sum_{i=1}^\I \lambda_i^2\,v_i(x)^2\right|<
\infty\,,\qquad\sup_{x\in\dom}\left|\sum_{i=1}^\I \lambda_i^2L_i^{2\rho}|v_i(x)|^{2(1-\rho)}\right|\, <\,\infty
\ee
for a $\rho\in(0,1)$. Then the process
\be
\label{eq:stochastic_convolution}
O(x,t):=\int_0^t \e^{-\alpha(t-s)}dW_s(x)=
\sum_{i=1}^\I \lambda_i \int_0^t \e^{-\alpha(t-s)}\,d\beta^i_s\,v_i(x)
\ee
possesses a modification with $\gamma$-H\"older continuous paths 
in $\R_+\times\dom$ for all $\gamma\in(0,\rho/2)$.
\end{lemma}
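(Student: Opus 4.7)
My plan is to apply the Kolmogorov--Chentsov continuity criterion to the centered Gaussian field $O$ on the compact box $[0,T]\times\dom\subset\R^{d+1}$ for arbitrary $T>0$; a standard extension then gives continuity on all of $\R_+\times\dom$. Each term in the series \eqref{eq:stochastic_convolution} is an independent centered Gaussian variable, and the first hypothesis in \eqref{lemma:spatial_continuity_conditions} forces mean-square convergence of the series uniformly in $(t,x)$. Consequently every increment $O(z_1)-O(z_2)$ is Gaussian, so $E|O(z_1)-O(z_2)|^{2n}\leq C_n\bigl(E|O(z_1)-O(z_2)|^2\bigr)^n$ for all $n\in\N$, and it suffices to bound the second moment of the increment separately in time and in space.

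For the temporal part, set $Y^i_t:=\int_0^t \e^{-\alpha(t-r)}d\beta^i_r$. Decomposing $Y^i_t-Y^i_s=\int_s^t \e^{-\alpha(t-r)}d\beta^i_r+(\e^{-\alpha(t-s)}-1)Y^i_s$ and applying the It\^o isometry gives $E|Y^i_t-Y^i_s|^2\leq C(\alpha,T)|t-s|$, uniformly in $i$. The first condition in \eqref{lemma:spatial_continuity_conditions} then yields, uniformly in $x\in\dom$,
\benn
E|O(x,t)-O(x,s)|^2=\sum_{i=1}^{\I}\lambda_i^2\,v_i(x)^2\,E|Y^i_t-Y^i_s|^2\leq C'|t-s|.
\eenn

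For the spatial part, the variances $E|Y^i_t|^2$ are uniformly bounded in $i$ and in $t\in[0,T]$, and the key interpolation is
\benn
|v_i(x)-v_i(y)|^{2}\leq(L_i|x-y|)^{2\rho}\bigl(|v_i(x)|+|v_i(y)|\bigr)^{2(1-\rho)},
\eenn
obtained by writing $|v_i(x)-v_i(y)|^2=|v_i(x)-v_i(y)|^{2\rho}\cdot|v_i(x)-v_i(y)|^{2(1-\rho)}$ and using the Lipschitz and pointwise bounds, respectively. Summing against $\lambda_i^2$, using $(a+b)^{2(1-\rho)}\leq C_\rho(a^{2(1-\rho)}+b^{2(1-\rho)})$, and invoking the second condition in \eqref{lemma:spatial_continuity_conditions} produces $E|O(x,t)-O(y,t)|^2\leq C''|x-y|^{2\rho}$ uniformly in $t\in[0,T]$.

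Combining the two estimates and invoking Gaussianity yields $E|O(z_1)-O(z_2)|^{2n}\leq \widehat C_n |z_1-z_2|^{\min(2n\rho,n)}$ on the bounded box (with $z_1=(x,t),\,z_2=(y,s)$), and applying Kolmogorov--Chentsov on $\R^{d+1}$ for $n$ large enough that $\min(2n\rho,n)>d+1$ produces a continuous modification whose paths are jointly Hölder of every exponent $\gamma<\min(\rho,1/2)-(d+1)/(2n)$; sending $n\to\I$ recovers every $\gamma\in(0,\rho/2)$ claimed in the lemma (in fact any $\gamma<\min(\rho,1/2)$, which is strictly sharper since $\rho\in(0,1)$). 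The main obstacle is the spatial interpolation step: one must calibrate the exponents so that precisely the summability condition of \eqref{lemma:spatial_continuity_conditions} appears, and some care is needed with the constant $C_\rho$ in the inequality $(a+b)^{2(1-\rho)}\leq C_\rho(a^{2(1-\rho)}+b^{2(1-\rho)})$ according to whether $\rho\lessgtr 1/2$; the OU variance computation, the Gaussian moment comparison, and the Kolmogorov--Chentsov application are routine.
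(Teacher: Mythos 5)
Your proof is correct and follows essentially the same route as the paper: mean-square increment estimates separately in time and space (using exactly the same interpolation $|v_i(x)-v_i(y)|^2\leq (L_i|x-y|)^{2\rho}\bigl(|v_i(x)|+|v_i(y)|\bigr)^{2(1-\rho)}$), the Gaussian moment comparison, and the Kolmogorov--Chentsov theorem on compacts of $\R_+\times\dom$. The only difference is bookkeeping: the paper deliberately weakens the temporal bound to $|t-s|^\rho$ and the spatial bound to $|x-y|^\rho$ so as to obtain the symmetric estimate $C_\rho(|x-y|^2+|t-s|^2)^{\rho/2}$ and hence the stated exponent range $(0,\rho/2)$, whereas you retain the sharper exponents $|t-s|$ and $|x-y|^{2\rho}$ and thereby get H\"older continuity up to $\min(\rho,1/2)$, which indeed implies the lemma.
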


\begin{proof} We prove the lemma applying the Kolmogorov-Centsov Theorem (cf.~\cite[Thm.~3.3 \& Thm.~3.4]{daPratoZabczyk}). 
Throughout the proof $C$ is some finite, constant which may change from line 
to line but is independent of $x,y\in\dom$ and $t,s\geq 0$. We start showing 
that the process $O$ is H\"older continuous in the mean-square in each direction. 
As $v_i$ are assumed continuous these are pointwise uniquely given and each 
$O(t,x)$ is for fixed $x\in\dom$ and $t\geq 0$ a Gaussian random variable due 
to $\sum_{i=1}^\I \lambda^2_i v_i(x)^2<\infty$. Hence, for all 
$0\leq s\leq t$ and all $x,y\in\dom$ we obtain 
\beann
\E|O(x,t)-O(y,t)|^2 &=&
\sum_{i=1}^\I \lambda_i^2\int_0^t\e^{-2\alpha(t-s)}\,ds\,|v_i(x)-v_i(y)|^2\\[1ex]
&\leq& C\,\sup_{z\in\dom}\left|\sum_{i=1}^\I \lambda_i^2L_i^{2\rho}|v_i(z)|^{2(1-\rho)}\right| |x-y|^\rho
\eeann 
using
\benn
|v_i(x)-v_i(y)|^2\leq L_i^{2\rho}|x-y|^\rho|x-y|^\rho\bigl(|v_i(x)|+|v_i(y)|\bigr)^{2(1-\rho)}
\eenn
for every $\rho\in[0,1]$ and $|x-y|^\rho\leq \textnormal{diam}(\dom)^\rho$. 
%|v_i(x)-v_i(y)|^2\leq C L_i^{2\rho}|x-y|^\rho|x-y|^\rho$ for every 
%$\rho\in[0,1]$ following \cite[Sect.~5.5.1]{daPratoZabczyk} and 
%$|x-y|^\rho\leq \textnormal{diam}(\dom)^\rho$. 
Next, for the temporal regularity we 
obtain
\beann
\lefteqn{\E|O(x,t)-O(x,s)|^2}\\[1ex]
&\!\!\!\!=&\!\!\!\sum_{i=1}^\I \lambda_i^2\, v_i(x)^2 \int_s^t \e^{-2\alpha(t-r)}dr +
 \sum_{i=1}^\I \lambda_i^2\,v_i(x)^2\int_0^s\left|\e^{-\alpha(t-r)}-\e^{-\alpha(s-r)}\right|^2dr\\[1ex]
&\!\!\!\!=&\!\!\! \sum_{i=1}^\I\lambda_i^2\, v_i(x)^2\left(\frac{1-\e^{-\alpha (t-s)}}{2\alpha}\right)
+\sum_{i=1}^\I\lambda_i^2\, v_i(x)^2\left(\frac{(1-
\e^{-\alpha(t-s)})^2-(\e^{-\alpha t}-\e^{-\alpha s})^2}{2\alpha}\right).
\eeann
As the exponential function on the negative half axis is H\"older continuous for 
every $\rho\in[0,1]$ it holds
\beann
\E|O(x,t)-O(x,s)|^2&\leq& C_\rho\sum_{i=1}^\I\lambda_i^2\, v_i(x)^2 \,|t-s|^\rho\,.
\eeann
Thus, overall Jensen's inequality yields $\E|O(x,t)-O(y,s)|^2\leq C_\rho(|x-y|^2+|t-s|^2)^{\rho/2}$. 
Since the difference $O(x,t)-O(y,s)$ is centered Gaussian it further holds that
\benn
\E|O(x,t)-O(y,s)|^{2m}\,\leq\,C_{\rho,m}(|t-s|^2+|x-y|^2)^{m\rho/2}\qquad\forall\,m\in\N\,.
\eenn
Now, the Kolmogorov-Centsov Theorem implies the statement of the lemma.
\end{proof}

We present an example to illustrate the type of noise we are generally interested in. 
Further motivation is provided in Section \ref{sec:gain_perturb}. 

\begin{example2}
\label{ex:trigonometric_noise}
Consider the neural field equation on a $d$-dimensional cube $\dom=[0,2\pi]^d$
with noise based on trigonometric 
basis functions of $L^2([0,2\pi]^d)$. This type of noise is almost ubiquitous 
in applications as for the stochastic heat equations the basis functions can 
be chosen such that the usual (Dirichlet, Neumann or periodic) boundary conditions 
are preserved. For the example of noise preserving homogeneous Neumann boundary conditions 
the basis functions are
\be
v_i(x)=\prod_{k=1}^d e_{i_k}(x_k),
\ee
where $x=(x_1,\ldots,x_d)$, $i=(i_1,\ldots,i_k)$ is a multi-index in $\N^d$ and the functions $e_{i_k}$ are given by
\benn
e_{i_k}(x_k)=\left\{\begin{array}{cl} \displaystyle\frac{1}{\sqrt{2\pi}} & i_k=0,\\[2ex]
\displaystyle\frac{1}{\sqrt{\pi}}\,\cos(i_kx_k/2)& i_k\geq 1\,.
\end{array}\right.
\eenn 
The functions $v_i$ are for all $i\in\N^d$ pointwise bounded by $\pi^{-d/2}$ and Lipschitz 
continuous with Lipschitz constants given by $L_i=\pi^{-d/2}\,|i|$ (cf.~\cite[Lemma 5.3]{BloemkerJentzen}). 
Next, we construct a trace class Wiener process from these basis functions. A particular important 
example of spatio-temporal noise is smooth noise with exponentially decaying spatial correlation length 
\cite{BressloffWebber,KilpatrickErmentrout,GarciaSancho}, i.e.,
\be
\label{eq:exponentially_decaying_noise}
\E W_t(x)W_s(y) \, =\,\min\{t,s\}\,\frac{1}{(2\xi)^d}
\exp\Bigl(-\frac{\pi}{4}\frac{|x-y|^2}{\xi^2}\Bigr)\,+\,\textnormal{correction on the boundary}
\ee
for a parameter $\xi>0$ modelling the spatial correlation length. 
Note that for $\xi\to 0$ this noise process approximates space-time 
white noise. Following \cite{Shardlow} we can calculate under the assumption that $\xi\ll 2\pi$ 
the coefficients $\lambda_i^2$ such that the $Q$-Wiener process \eqref{eq:wiener_process} possesses 
the correlation function \eqref{eq:exponentially_decaying_noise} and obtain
\be
\lambda_i^2\,=\,\exp\Bigl(\frac{-\xi^2\,|i|^2}{4\pi}\Bigr)\,.
\ee
Now, it is easy to see that for this choice of eigenvalues the noise is of trace class and moreover the addtional conditions of Lemma \ref{lemma:spatial_continuity} are satisfied: As the functions $v_i$ are bounded we obtain
\beann
\sup_{x\in\dom}\left|\sum_{i\in \N^d}^\I \lambda_i^2\,v_i(x)^2\right| &\leq& \pi^{-d} + 
\pi^{-d} \sum_{N=1}^\I\ \sum_{i\in\{0,\ldots,N\}^d\backslash\{0,\ldots,N-1\}^d} \exp\Bigl(\frac{-\xi^2\,|i|^2}{4\pi}\Bigr)\\[1ex]
&\leq& \pi^{-d}+\pi^{-d} \sum_{N=0}^\I \exp\Bigl(\frac{-\xi^2\,N^2}{4\pi}\Bigr)\,2^{N-1}\\[1ex]
&<&\I
\eeann
and the second condition of \eqref{lemma:spatial_continuity_conditions} is satisfied as
\beann
\sup_{x\in\dom}\left|\sum_{i=\N^d} \lambda_i^2L_i^{2\rho}|v_i(x)|^{2(1-\rho)}\right| &\leq& \pi^{-d} 
\sum_{N=1}^\I\ \sum_{i\in\{0,\ldots,N\}^d\backslash\{0,\ldots,N-1\}^d} \exp\Bigl(\frac{-\xi^2\,|i|^2}{4\pi}\Bigr)\,|i|^{2\rho}\\[1ex]
&\leq& \pi^{-d}\sqrt{d} \sum_{N=0}^\I \exp\Bigl(\frac{-\xi^2\,N^2}{4\pi}\Bigr)\, N^{2\rho}2^{N-1}\\[1ex]
&<& \I\,.
\eeann
\end{example2}

%%%%%%%%%%%%%%%%%%%%%%%%%%%%%%%%%%%%%%%%%%
\section{Gain Function Perturbation}
\label{sec:gain_perturb}

Another motivation for the considered additive noise neural field equations stems from 
a (formal) perturbation of the gain function $f$ with space-time white noise. 
Let $\dot \cW$ denote space time white noise and consider the randomly perturbed 
Amari equation
\be
\label{eq:white_noise_perturbation}
\partial_t U(x,t) = -\alpha U(x,t) +\int_\dom w(x,y)\Bigl(f(U(y,t))+
\epsilon\,\dot \cW(y,t)\Bigr)\,dy\,.
\ee
Recall that, by assumption (H2), the integral operator $K$ defined by the 
kernel $w$ is a self-adjoint compact operator. Thus the Spectral Theorem 
implies that $K$ possess only real eigenvalues $\lambda_i$, $i\in\N$, and 
the corresponding eigenfunctions $v_i$ form an orthonormal basis of $L^2(\dom)$. If 
additionally we assume that
\begin{enumerateII}
 \item[(H4)] $K$ is a Hilbert-Schmidt operator on $L^2(\dom)$, that is, 
$\|w\|_{L^2(\dom\times\dom)}<\infty$,
\end{enumerateII}
then the eigenvalues satisfy $\sum_{i=1}^\I \lambda_i^2<\infty$. Hence, $K$ 
possesses the series representation
\benn
Kg = \sum_{i=1}^\I \lambda_i\,\langle g,v_i \rangle \,v_i \qquad\forall\,g\in L^2(\dom)
\eenn
which yields for the perturbed equation \eqref{eq:white_noise_perturbation} 
the representation
\benn
\partial_t U(x,t) = -\alpha U(x,t) +\sum_{i=1}^\I \lambda_i\, 
\left(\int_\dom f(U(y,t)) v_i(y)\,dy +\epsilon\,\langle \dot \cW(t,\cdot),v_i \rangle \right)\, v_i(x)\,.
\eenn
Next, note that the random variables $\dot\beta^i_t=\langle\dot \cW(\cdot,t),v_i \rangle$ 
form a sequence of independent scalar white noise processes in time. Therefore, 
the perturbed equation becomes
\benn
\partial_t U(x,t) = -\alpha U(x,t) +\int_\dom w(x,y)\,f(U(y,t))\,dy + 
\epsilon\,\sum_{i=1}^\I \lambda_i\,\dot\beta^i_t\,v_i(x)\,.
\eenn
Re-writing this equation in the usual notation of stochastic differential equations we obtain
\be
d U_t(x) = \left[-\alpha U_t(x) +\int_\dom w(x,y)\,f(U_t(y))\,dy\right]dt + \epsilon\,d W_t(x)\,,
\ee
where 
\benn
W_t(x)=\sum_{i=1}^\I \lambda_i\,\beta^i_t\,v_i(x)
\eenn
is a trace-class Wiener process on the Hilbert space $L^2(\dom)$. Note, when 
comparing to \eqref{eq:wiener_process} here the coefficients $\lambda_i$ may be 
negative, however, as $-\beta^i$ is also a Wiener process this slight 
inconsistency can be neglected.\medskip

We next want to discuss spatial continuity of the solution to this equation 
with its particular noise structure. It is clear that this should translate 
into smoothing conditions of the kernel $w$. Due to Lemma 
\ref{lemma:spatial_continuity} it is sufficient to establish conditions 
\eqref{lemma:spatial_continuity_conditions}: First, it holds that
\benn
\sum_{i=1}^\I \lambda_i^2 v_i(x)^2=\sum_{i=1}^\I\left(\int_\dom w(x,y)v_i(y)dy\right)^2=
\sum_{i=1}^\I \langle w(x,\cdot),v_i\rangle^2
= \|w(x,\cdot)\|_{L^2(\dom)}^2
\eenn
due to Parseval's identity. Hence, the first condition of \eqref{lemma:spatial_continuity_conditions} 
becomes
\be
\label{eq:kernel_condition_1}
\sup_{x\in\dom} \|w(x,\cdot)\|_{L^2(\dom)}\,<\I.
\ee
Next, the basis functions are continuous if the kernel $w(x,y)$ is continuous 
in $x$ and as the minimal Lipschitz constant is given by the supremum on the 
derivatives we obtain
\benn
L_i\,=\,\sup_{x\in\dom}\left|\frac{1}{\lambda_i}\nabla_{\!x}
\int_\dom w(x,y)v_i(y)dy\right|\,\leq\,\frac{1}{|\lambda_i|} 
\sup_{x\in\dom}\|\nabla_{\!x} w(x,\cdot)\|_{L^2(\dom)}
\eenn
due to the Cauchy-Schwarz inequality. Therefore the second condition in 
\eqref{lemma:spatial_continuity_conditions} is satisfied if
\be
\label{eq:kernel_condition_2}
\sup_{x\in\dom} \|\nabla_{\!x} w(x,\cdot)\|_{L^2(\dom)}\,<\I\qquad\textnormal{and}
\qquad\sum_{i=1}^\I |\lambda_i|^{2(1-\rho)}|v_i(x)|^{2(1-\rho)}\leq M \quad\forall\,x\in\dom
\ee
for a $\rho\in(0,1)$ and a $M<\I$. The condition \eqref{eq:kernel_condition_1} and the first 
part of \eqref{eq:kernel_condition_2} are easily checked but for the second part 
of \eqref{eq:kernel_condition_2} usually theoretical results on the speed of 
decay of the eigenvalues have to be obtained. We note that \eqref{eq:kernel_condition_2} 
is certainly satisfied with $\rho=1/2$ if $K$ is a trace class operator and the eigenfunctions are pointwise bounded independently of $i$, see, e.g., Example \ref{ex:trigonometric_noise}.

%%%%%%%%%%%%%%%%%%%%%%%%%%%%%%%%%%%%%%%%%%%%%%%
\section{Deterministic Dynamics}
\label{sec:deterministic}

The classical deterministic Amari model, obtained for $\epsilon=0$ in \eqref{eq:Amari1}, is
\be
\label{eq:Amari1_det}
\partial_t U(x,t)=-\alpha U(x,t)+\int_{\dom} w(x,y)f(U(y,t))dy.
\ee
where $\dom\subseteq \R^d$. Note that we may allow $\dom$ to be unbounded for the deterministic case
as solutions of \eqref{eq:Amari1_det} do exist in this case \cite{PotthastBeimGraben}. Suppose 
there exists a stationary solution $U^*=U^*(x)$ of \eqref{eq:Amari1_det}. To determine the 
stability of $U^*$ consider $U(x,t)=U^*(x)+\psi(x,t)$. 
Substituting into \eqref{eq:Amari1_det} and Taylor-expanding around $U^*$ yields the 
linearized problem
\be
\label{eq:Amari1_det_lin}
\partial_t \psi(x,t)=-\alpha \psi(x,t)+\int_{\dom} w(x,y)(Df)(U^*(y))\psi(y,t)dy.
\ee 
Hence, the standard ansatz $\psi(x,t)=\psi_0(x)e^{\mu t}$ leads to the eigenvalue problem 
\be
\label{eq:Amari1_det_operator}
\underbrace{(\mu +\alpha)}_{=:\eta}\psi_0(x)=\int_{\dom} w(x,y)(Df)(U^*(y))\psi_0(y)dy:=
(\cL \psi_0)(x)\quad \text{or}\quad \cL \psi_0=\eta\psi_0.
\ee 
The linear stability condition $\mu<0$ is equivalent to $\eta<\alpha$ where $\eta\in \text{spec}(\cL)$. 
The stability analysis can be reduced to the understanding of the operator $\cL$. However, 
this is a highly non-trivial problem as the behaviour depends upon $\dom$, $U^*(x)$, $w(x,y)$ 
and $f(u)$.\medskip 

An LDP and Kramers' law are of particular interest in the case of bistability. Therefore, 
we point out that there are many situations where \eqref{eq:Amari1_det} does have three 
stationary solutions: $U^*_\pm(x)$ which are stable and $U^*_0(x)$ which is unstable. 
The following three examples make this claim more precise.

\begin{example2}
\label{ex:EM}
The first example is presented by Ermentrout and McLeod \cite{ErmentroutMcLeod}. Let $\dom=\R$, 
$w(x,y)=w(|x-y|)$, $\alpha=1$ and assume that $0\leq U(x,t)\leq 1$. Furthermore, suppose that 
$f\in C^1([0,1],\R)$ with $f'>0$ and 
\be
\label{eq:alg_EM}
\tilde{f}(U):=-U+f(U)
\ee
has precisely three zeros $U=0,a,1$ with $0<a<1$. The additional conditions $f'(0)<1$ and 
$f'(1)<1$ guarantee stability of the stationary solutions $U=0$ and $U=1$. As an even more 
explicit assumption \cite[p.463]{ErmentroutMcLeod} one may consider a
Dirac $\delta$-distribution for $w$ in \eqref{eq:Amari1_det} which yields 
\be
\label{eq:EM_ex}
\partial_t U(x,t)=-U(x,t)+F(U(x,t)).
\ee  
Suppose there are precisely three solutions for $U=F(U)$ given by $U=0,a,1$ with $0<a<1$. 
If $F'(0)<1$, $F'(1)<1$ and $F'(a)>1$ then \eqref{eq:EM_ex} has an unstable 
stationary solution between the two stable stationary solutions.  
\end{example2}

\begin{example2}
\label{ex:GC}
An even more concrete example is given by Guo and Chow \cite{GuoChow1,GuoChow2}. They assume 
$\dom=\R$, $w(x,y)=w(x-y)$, $\alpha=1$ and fix two functions
\benn
f(u)=[b(u-u_b)+1]H(u-u_b),\qquad w(x)=Ae^{-a|x|}-e^{|x|}
\eenn
where $H(\cdot)$ is the Heaviside function and $b$, $a$, $A$ and $u_b$ are parameters. Depending 
on parameter values one may obtain three constant stationary solutions exhibiting bistability as 
expected from Example \ref{ex:EM}. However, there are also parameter values so that three stationary 
pulses exhibiting bistability exist.
\end{example2}

Note that the choice $\dom=\R$ is not essential to obtain two deterministically-stable stationary states 
$U^*_\pm(x)$ and one deterministically-unstable stationary state $U^*_0(x)$. The important aspect is that
certain algebraic equations, such as $U=f(U)$ and $U=F(U)$ in Example \ref{ex:EM}, have the correct
number of solutions. Furthermore, one has to make sure that the sign of the nonlinearity $f$ is chosen 
correctly to obtain the desried deterministic stability results for the stationary solutions. Hence 
we expect that a similar situation also holds for bounded domains; see also \cite{VeltzFaugeras}.\medskip

Examples \ref{ex:EM}-\ref{ex:GC} are typical for many similar cases with $x\in\R$ 
or $x\in\R^2$. Many results on existence and stability of stationary solutions are 
available see {e.g.}~\cite{Amari,LaingTroy,KubotaHamaguchiAihara,LaingTroyGutkinErmentrout} 
and references therein.

\begin{example2}
As a higher-dimensional example one may consider the work by Jin, Liang and Peng 
\cite{JinLiangPeng} who assume that $w(x,y)=w(x-y)$, $\alpha=1$, $\dom=\R^d$ and 
\benn
Z_\I=\int_{\R^d} w(x)~dx<\I, \qquad \kappa Z_\I>1,
\eenn 
where $\kappa$ is the Lipschitz constant of $f\in C^1(\R^d,\R)$. Furthermore, suppose 
$f'$ is uniformly continuous and 
\benn
\begin{array}{lcll}
f'(U)Z_\I&<&1 \quad & \text{for $U\in (-\I,U_1)\cup (U_2,\I)$},\\
f'(U)Z_\I&=&1 \quad & \text{for $U\in \{U_1,U_2\}$},\\
f'(U)Z_\I&>&1 \quad & \text{for $U\in (U_1,U_2)$},\\
\end{array}
\eenn
for $U_1<0< U_2$. Then \cite[Prop.11]{JinLiangPeng} the conditions
\benn
-U_1+f(U_1)Z_\I<0\qquad \text{and}\qquad -U_2+f(U_2)Z_\I>0 
\eenn 
yield three stationary solutions $U^*_+$, $U^*_-$ and $U^*_0$. The solutions 
$U^*_\pm$ are stable and satisfy $U^*_-\leq 0$ and $U^*_+>0$. The solution $U^*_0$ 
is unstable.
\end{example2}

Although we only focus on stationary solutions, it is important to remark that the 
techniques developed here could - in principle - also be applied to travelling waves 
$U(x,t)=U(x-st)$ for $s>0$. The existence and stability of travelling waves for 
\eqref{eq:Amari1_det} has been investigated for many different situations; see 
{e.g.}~\cite{BressloffReview,ErmentroutMcLeod,Bressloff1,CoombesOwen} and references 
therein. However, it seems reasonable to restrict ourselves here to the stationary 
case as even for this simpler case an LDP and Kramers' law are not yet well understood.

%%%%%%%%%%%%%%%%%%%%%%%%%%%%%%%%%%%%%%%%%%%%%%%%%%%%%%%%%
\section{Large Deviations and Kramers' Law}
\label{sec:LDP}

Here we briefly introduce the background and notation for LDPs and Kramers' law needed 
through the remaining part of the paper; see \cite{FreidlinWentzell,DemboZeitouni} for 
more details. Consider a topological space $\cX$ with Borel $\sigma$-algebra $\cB_\cX$. 
A mapping $I:\cX\ra [0,\I]$ is called a good rate function if it is lower semicontinuous 
and the level set $\{h:I(h)\leq \alpha\}$ is compact for each $\alpha\in[0,\I)$. Sometimes 
the term action functional is used instead of rate function. Consider a family 
$\{\mu^\epsilon\}$ of probability measures on $(\cX,\cB_\cX)$. The measures $\{\mu^\epsilon\}$ 
satisfy an LDP with good rate function $I$ if 
\be
\label{eq:LDP}
-\inf_{\Gamma^o}I\leq \liminf_{\epsilon\ra 0}\epsilon^2\ln \mu^\epsilon(\Gamma)\leq
\limsup_{\epsilon\ra 0}\epsilon^2\ln \mu^\epsilon(\Gamma)\leq -\inf_{\bar{\Gamma}}I 
\ee
holds for any measurable set $\Gamma\subset \cX$; often infima over the interior 
$\Gamma^o$ and closure $\bar{\Gamma}$ coincide so that $\liminf$ and $\limsup$ coincide 
at a common limit. One of the most classical cases is the application of \eqref{eq:LDP} 
to finite-dimensional SDEs
\be
\label{eq:gen_SDE_ld}
du_t=g(u_t)dt+\epsilon G(u_t) d\beta_t
\ee
where $u_t\in\R^N$, $g:\R^N\ra \R^N$, $G:\R^N\ra \R^{N\times k}$,  
$\beta_t=(\beta_t^{1},\ldots,\beta_t^{k})^T$ is a vector of $k$ independent Brownian 
motions and we shall assume that the initial condition $u_0\in\R^N$ is deterministic. If we want to 
emphasize that $u_t$ depends on $\epsilon$ we shall also use the notation $u^\epsilon_t$. 
The topological 
space is chosen as a path space
\benn
\cX:=C_0([0,T],\R^N)=\{\phi\in C([0,T],\R^N):\phi(0)=u_0\}.
\eenn
To state the next result we also need the Sobolev space
\be
\label{eq:Sobolev_intro}
H_1^N:=\{\phi:[0,T]\ra \R^N:\text{$\phi$ absolutely continuous, $\phi'\in L^2$, $\phi(0)=0$}\}.
\ee
Furthermore, we are going to assume that the diffusion matrix 
$\mathfrak{D}(u):=G(u)^TG(u)\in\R^{N\times N}$ is positive definite.

\begin{theorem}(\cite{FreidlinWentzell},\cite{DemboZeitouni})
\label{thm:SDE_LDP}
The SDE \eqref{eq:gen_SDE_ld} satisfies the LDP \eqref{eq:LDP} given by
\be
\label{eq:LDP1}
-\inf_{\Gamma^o}I\leq \liminf_{\epsilon\ra 0}\epsilon^2\ln \P((u_t^\epsilon)_{t\in[0,T]}\in\Gamma)\leq
\limsup_{\epsilon\ra 0}\epsilon^2\ln \P((u_t^\epsilon)_{t\in[0,T]}\in\Gamma)\leq -\inf_{\bar{\Gamma}}I. 
\ee
for any measurable set of paths $\Gamma\subset \cX$ with good rate function
\be
\label{eq:LDP_rate_fd}
I(\phi)=I_{[0,T]}(\phi)=\left\{
\begin{array}{ll}
 \frac12 \int_0^T(\phi'_t-g(\phi_t))^T\mathfrak{D}(\phi_t)^{-1}(\phi_t'-g(\phi_t))dt,
 & \text{if $\phi\in u_0+H_1^N$,}\\[1ex]
+\I  & \text{otherwise.}
\end{array}
\right.
\ee
\end{theorem}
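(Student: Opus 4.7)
The plan is to deduce this classical finite-dimensional LDP from Schilder's theorem for the driving Brownian motion combined with a continuous-mapping argument in the style of Freidlin and Wentzell. First I recall Schilder's theorem: on $C_0([0,T],\R^k)$, the family $\{\epsilon\beta\}_{\epsilon>0}$ satisfies an LDP with good rate function
$$J(\psi)=\tfrac12\int_0^T|\psi'_t|^2\,dt\quad\text{for }\psi\in H_1^k,\qquad J(\psi)=+\infty\text{ otherwise.}$$
Given this, I introduce the Freidlin--Wentzell skeleton map $\Phi:C_0([0,T],\R^k)\to C_0([0,T],\R^N)$ which sends a control path $\psi\in H_1^k$ to the unique solution of the controlled ODE $\phi'_t=g(\phi_t)+G(\phi_t)\psi'_t$ with $\phi(0)=u_0$. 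The strategy is to transport Schilder's LDP through $\Phi$ and then show that the true solution $u^\epsilon$ is exponentially equivalent to $\Phi(\epsilon\beta)$.

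Second, I verify that $\Phi$ is continuous with respect to the sup-norm topology when restricted to level sets $\{J\le\alpha\}$. Using the Lipschitz hypotheses on $g$ and $G$ (which are implicit in the theorem's standing assumptions, and which can be enforced by a standard truncation argument on large compacta), a Gronwall-type estimate yields
$$\sup_{t\le T}|\Phi(\psi^1)_t-\Phi(\psi^2)_t|\le C_\alpha\,\|\psi^1-\psi^2\|_\infty$$
for $\psi^1,\psi^2$ with $J(\psi^i)\le\alpha$. Contraction (via the push-forward $\Phi_*J$) identifies the candidate rate function as $\tilde I(\phi)=\inf\{J(\psi):\Phi(\psi)=\phi\}$. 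For $\phi\in u_0+H_1^N$, the constraint reads $G(\phi_t)\psi'_t=\phi'_t-g(\phi_t)$; since $\mathfrak{D}=G^TG$ is positive definite one may invert on the range of $G$, and minimising $\tfrac12\int|\psi'|^2\,dt$ subject to this pointwise affine constraint produces exactly the quadratic expression in \eqref{eq:LDP_rate_fd}, with $\tilde I(\phi)=+\infty$ otherwise.

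Third, the core technical step is exponential equivalence: for every $\delta>0$,
$$\limsup_{\epsilon\to0}\epsilon^2\log\P\Bigl(\sup_{t\le T}|u^\epsilon_t-\Phi(\epsilon\beta)_t|>\delta\Bigr)=-\infty.$$
This follows by subtracting the two integral equations, applying a Gronwall estimate to the drift difference, and bounding the stochastic integral $\int_0^t(G(u^\epsilon_s)-G(\Phi(\epsilon\beta)_s))\,d\beta_s$ through an exponential martingale inequality of Bernstein type. A localisation argument restricting to an exponentially large compact set controls the potentially unbounded coefficients. Combining Schilder's theorem, the continuity of $\Phi$ on level sets, and this exponential equivalence through the extended contraction principle (see \cite{DemboZeitouni}) yields \eqref{eq:LDP1}. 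I expect the exponential martingale tail estimate to be the main technical obstacle, since one needs uniform-in-$\epsilon$ bounds that are tight at the rate $\epsilon^2$; everything else reduces to deterministic ODE comparison and a straightforward variational minimisation.
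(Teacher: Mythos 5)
The paper does not prove this statement at all: Theorem \ref{thm:SDE_LDP} is quoted verbatim as a classical result of Freidlin--Wentzell theory, with proofs delegated to \cite{FreidlinWentzell} and \cite{DemboZeitouni}. So there is no in-paper argument to compare against; what you have written is a sketch of the standard textbook proof (Schilder's theorem, the skeleton map, the contraction principle, and an approximation argument), which is indeed the route taken in \cite[Sec.~5.6]{DemboZeitouni}. Your identification of the rate function via the pointwise minimisation of $\tfrac12\int|\psi'|^2\,dt$ subject to $G(\phi_t)\psi'_t=\phi'_t-g(\phi_t)$, using positive definiteness of the diffusion matrix, is correct and is exactly how the quadratic form in \eqref{eq:LDP_rate_fd} arises.

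Two points in your sketch are imprecise enough to matter. First, the phrase ``show that $u^\epsilon$ is exponentially equivalent to $\Phi(\epsilon\beta)$'' is not meaningful as written: $\Phi$ is only defined on $H_1^k$, and Brownian paths are almost surely not of finite energy, so $\Phi(\epsilon\beta)$ does not exist for non-constant $G$. The standard repair is to work with polygonal (or mollified) interpolations $\beta^{(m)}$, show that $\{\Phi(\epsilon\beta^{(m)})\}$ is an exponentially good approximation of $\{u^\epsilon\}$ as $m\to\infty$, and then invoke the extended contraction principle \cite[Thm.~4.2.23]{DemboZeitouni}; you cite the right tool at the end, but the intermediate object must be the interpolated path, not $\epsilon\beta$ itself. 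Second, the asserted Lipschitz bound $\sup_{t\le T}|\Phi(\psi^1)_t-\Phi(\psi^2)_t|\le C_\alpha\|\psi^1-\psi^2\|_\infty$ does not follow from a naive Gronwall estimate, because the controlled ODE involves $\psi'$ rather than $\psi$: the term $\int_0^t G(\Phi(\psi^1)_s)\,d(\psi^1-\psi^2)_s$ must be integrated by parts (using regularity of $s\mapsto G(\Phi(\psi^1)_s)$ on a level set of $J$) before it can be controlled by the sup-norm of $\psi^1-\psi^2$. Both gaps are standard to fill, but as stated those two steps would not go through.
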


An important application of the LDP \eqref{eq:LDP1} is the so-called 
first-exit problem. Suppose that $u_t$ starts near a stable equilibrium 
$u^*\in \cD\subset \R^N$ of the deterministic system given by setting $\epsilon=0$
in \eqref{eq:gen_SDE_ld}, where $\cD$ is a bounded domain with smooth boundary. 
Define the first-exit time 
\be
\label{eq:ld_basic_fe}
\tau^\epsilon_\cD:=\inf \{t>0:u^\epsilon_t\not\in \cD\}.
\ee 
To formalize the application of the LDP define the mapping
\be
Z(u,v;s):=\inf\{I(\phi):\phi\in C([0,s],\R^N),\phi_0=u,\phi_s=v\}
\ee  
which is the cost for a path starting at $u$ to reach $v$ in time $s$. Next assume that $\bar{\cD}$ is properly contained inside the (deterministic) basin of attraction of $u^*$. Then one can show \cite[Thm.~4.1, p.124]{FreidlinWentzell} that
\be
\lim_{\epsilon\ra 0} \epsilon^2 \ln \P(\tau^\epsilon_{\cD}\leq t|u=u_0)=
\inf\{Z(u,v;s):s\in [0,t],v\not\in \cD\}.
\ee
To get a more precise information on the exit distribution one defines the function
\benn
Z(u^*,v)=\inf_{t> 0} Z(u^*,v;t)
\eenn 
which is called the quasipotential for $u^*$. It is natural to minimize the quasipotential over 
$\partial\cD$ and define
\benn
\bar{Z}:=\inf_{v\in\partial \cD}Z(u^*,v).
\eenn

\begin{theorem}(\cite[Thm.~4.2, p.127]{FreidlinWentzell},\cite[Thm.~5.7.11]{DemboZeitouni})
\label{thm:LDP_exit}
For all initial conditions $u\in \cD$ and all $\delta>0$, the following two limits hold:
\bea
&&\lim_{\epsilon\ra 0}\P\left(e^{(\bar{Z}-\delta)/\epsilon^2}<\tau^\epsilon_\cD<
e^{(\bar{Z}+\delta)/\epsilon^2}|u_0=u^*\right)=1,\label{eq:LDP_exit1}\\
&&\lim_{\epsilon\ra 0} \epsilon^2 \ln \E[\tau^\epsilon_\cD|u_0=u^*]=\bar{Z}.  \label{eq:LDP_exit2}
\eea
\end{theorem}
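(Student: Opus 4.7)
The plan is to deduce both statements from the path-space LDP of Theorem~\ref{thm:SDE_LDP} via a renewal/geometric-trial argument that exploits the strong Markov property of the SDE~\eqref{eq:gen_SDE_ld}. Fix a small closed ball $\bar B=\bar B(u^*,\rho)\subset \cD$ and a time horizon $T$ independent of $\epsilon$, and introduce the successive return times of $u^\epsilon_t$ to $\bar B$. Since $\bar\cD$ lies inside the deterministic basin of attraction of $u^*$, any trajectory in $\cD\setminus\bar B$ that does not exit $\cD$ is pulled back to $\bar B$ in time $O(1)$, so each length-$T$ excursion after a return can be treated as an independent ``attempt'' at leaving $\cD$, and the number of attempts before exit becomes a geometric-like variable whose success probability the LDP controls from above and below.

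For the upper bound $\tau^\epsilon_\cD<\e^{(\bar Z+\delta)/\epsilon^2}$, I would fix $\delta>0$ and select $\phi^*\in u^*+H_1^N$ on $[0,T]$ with $\phi^*_T\notin\bar\cD$ and $I_{[0,T]}(\phi^*)<\bar Z+\delta/4$; existence follows from the definition of $\bar Z$ and a mollification of any near-minimizer so that its derivative lies in $L^2$. Applying the LDP lower bound in \eqref{eq:LDP1} to a tube of radius $\gamma$ around $\phi^*$ small enough to force exit gives a per-attempt success probability at least $\e^{-(\bar Z+\delta/2)/\epsilon^2}$ for all $\epsilon$ small and all starting points in $\bar B$. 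With $N_\epsilon=\lceil \e^{(\bar Z+\delta)/\epsilon^2}/T\rceil$ attempts, a Bernoulli comparison
\benn
\P\bigl(\tau^\epsilon_\cD>N_\epsilon T \mid u_0=u^*\bigr)\leq\bigl(1-\e^{-(\bar Z+\delta/2)/\epsilon^2}\bigr)^{N_\epsilon}\longrightarrow 0
\eenn
delivers the desired probabilistic upper bound on the exit time.

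For the lower bound $\tau^\epsilon_\cD>\e^{(\bar Z-\delta)/\epsilon^2}$, I would instead apply the LDP upper bound in \eqref{eq:LDP1} to the closed set of length-$T$ paths starting in $\bar B$ that touch $\partial\cD$ without first returning to a smaller neighborhood of $u^*$. Lower semicontinuity of the good rate function $I$ together with the definition of $\bar Z$ force the infimum over this set to exceed $\bar Z-\delta/4$, so each such excursion exits $\cD$ with probability at most $\e^{-(\bar Z-\delta/2)/\epsilon^2}$, and a union bound over $\e^{(\bar Z-\delta)/\epsilon^2}/T$ excursions completes \eqref{eq:LDP_exit1}. The logarithmic equivalence \eqref{eq:LDP_exit2} then follows by integrating the tail estimate and extending the upper bound beyond the $(\bar Z+2\delta)$-scale by iterating the same geometric-trial argument on longer windows.

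The principal obstacle is establishing uniformity of both LDP estimates in the initial condition in $\bar B$, which is exactly what makes the ``independent attempts'' picture legitimate. This requires two ingredients: uniform return time of the deterministic flow from compact subsets of $\bar\cD$ to $\bar B$ (a compactness argument combined with the basin-of-attraction hypothesis), and continuity of the infima defining $\bar Z$ under small perturbations of the starting point. Both rely on positive definiteness of $\mathfrak D(u)=G(u)^TG(u)$, which keeps $I$ finite along every smooth path joining arbitrary points and so prevents any starting point in $\bar B$ from being dynamically isolated; without this nondegeneracy of the noise the quasipotential could jump and the geometric-trial counting would fail.
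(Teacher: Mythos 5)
First, a point of orientation: the paper does not prove Theorem \ref{thm:LDP_exit} at all --- it is imported verbatim from \cite[Thm.~4.2, p.127]{FreidlinWentzell} and \cite[Thm.~5.7.11]{DemboZeitouni} as a known finite-dimensional result, so there is no in-paper argument to compare yours against. What you have written is, in outline, the standard Freidlin--Wentzell proof from those references: an embedded-excursion (geometric-trials) decomposition based on the strong Markov property, with the per-attempt exit probability squeezed between $e^{-(\bar{Z}+\delta/2)/\epsilon^2}$ and $e^{-(\bar{Z}-\delta/2)/\epsilon^2}$ by the path-space LDP of Theorem \ref{thm:SDE_LDP}, together with the uniformity-in-the-initial-condition issues you correctly identify as the technical heart of the matter.

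One step in your sketch would not survive being written out as stated: the lower bound via ``a union bound over $e^{(\bar{Z}-\delta)/\epsilon^2}/T$ excursions.'' Excursions between the small ball and $\partial\cD$ are random in duration and can a priori be arbitrarily short, so the number of excursions completed by time $e^{(\bar{Z}-\delta)/\epsilon^2}$ is not bounded by that quotient, and the union bound does not close. The classical proof repairs this with a separate estimate (Lemma 5.7.23 in \cite{DemboZeitouni}) showing that the probability that the diffusion travels a fixed positive distance within a time interval of length $c$ is at most $e^{-K(c)/\epsilon^2}$ with $K(c)\ra\I$ as $c\ra 0$; this forces each down-up cycle to take at least a fixed positive time outside an exponentially negligible event, and only then does the counting argument, and hence \eqref{eq:LDP_exit1}, go through. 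The same estimate is needed to convert the tail bounds into the matching lower bound on $\E[\tau^\epsilon_\cD]$ in \eqref{eq:LDP_exit2}. Your appeal to positive definiteness of $\mathfrak{D}$ for continuity of the quasipotential and for the non-isolation of starting points in $\bar{B}$ is correct and is exactly how the cited sources obtain the required uniform estimates.
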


If the SDE \eqref{eq:gen_SDE_ld} has a gradient structure with identity diffusion matrix {i.e.}
\be
g(u)=-\nabla V(u) ~ \text{ for $V:\R^N\ra \R$},\qquad \text{and}\qquad G(u)=Id\in\R^{N\times N}
\ee
then one can show \cite[Sec.~4.3]{FreidlinWentzell} that the quasipotential is given by 
$Z(u^*,v)=2(V(v)-V(u^*))$. If the potential has precisely two local minima $u^*_\pm$ and 
a saddle point $u^*_s$ with $N-1$ stable directions so that the Hessian
$\nabla^2V(u^*_s)$ has eigenvalues
\benn
\rho_1(u^*_s)<0<\rho_2(u^*_s)<\cdots <\rho_N(u^*_s)
\eenn
then one can even refine Theorem \ref{thm:LDP_exit}. Suppose $u_0=u^*_-$ then the mean 
first passage time to $u^*_+$ 
satisfies
\be
\label{eq:Kramers}
\E[\inf\{t>0:\|u_t-u^*_+\|_2\leq \delta\}]\sim
 \frac{2\pi}{|\rho_1(u^*_s)|} \sqrt{\frac{|\det(\nabla^2V(u_s^*))|}{\det(\nabla^2V(u_-^*))}}
 \e^{2(V(u^*_s)-V(u^*_-))/\epsilon^2}
\ee
where $\|\cdot\|_2$ denotes the usual Euclidean norm in $\R^N$. The formula \eqref{eq:Kramers} is also known as 
Kramers' law \cite{Berglund3} or Arrhenius-Eyring-Kramers' 
law \cite{Arrhenius,Eyring,Kramers}. Note that the key differences with the general
LDP \eqref{eq:LDP_exit1} for the first-exit problem are that \eqref{eq:Kramers}
yields a precise prefactor for the exponential transition time and uses the explicit 
form of the good rate function for gradient systems. It is interesting to note that a 
rigorous proof of \eqref{eq:Kramers} has only been obtained quite recently 
\cite{BovierEckhoffGayrardKlein,BovierGayrardKlein}.

%%%%%%%%%%%%%%%%%%%%%%%%%%%%%%%%%%%%%%%%%%%%%%%%%%%%%%%%%%%%%%%%%%
\section{Gradient Structures in Infinite Dimensions}
\label{sec:gradient}

The finite-dimensional Kramers' formula \eqref{eq:Kramers} applies to SDEs 
\eqref{eq:gen_SDE_ld} with a gradient-structure $g(u)=-\nabla V(u)$ where 
$V:\R^N\ra \R$ is the potential. A generalization of Kramers' law has been 
carried over to the infinite-dimensional case of SPDEs given by
\be
\label{eq:SPDE_BG}
d U=[\Delta U-h'(U)]dt+\epsilon~ dW(x,t)
\ee
for $U=U(x,t)$, $x\in\tilde{\dom}\subset \R$, $\tilde{\dom}$ a bounded interval, $h\in C^k(\R,\R)$ 
for suitably large $k\in\N$ and $W(x,t)$ denotes space-time white noise and 
either Dirichlet or Neumann boundary conditions are used 
\cite{Barret,BerglundGentz11,BerglundGentz10}. A crucial reason why this 
generalization works is that the SPDE \eqref{eq:SPDE_BG} has a gradient-type 
structure \cite{FarisJona-Lasinio} given by the energy functional
\be
\label{eq:energy_BG}
V[U]:=\int_{\tilde{\dom}}\left[\frac12 U'(x)^2+h(U(x))\right]dx.
\ee
More precisely, when $\epsilon=0$ one obtains from \eqref{eq:SPDE_BG} a PDE, 
say with Dirichlet boundary conditions,
\be
\label{eq:SPDE_BG_PDE}
d U=[\Delta U-h'(U)]dt,\qquad U(x)=0\text{ on $\partial \tilde{\dom}$}
\ee
for a given sufficiently smooth initial condition $U(x,0)=U_0(x)\in C^k(\R,\R)$. 
Standard parabolic regularity \cite[Sec.~7.1]{Evans} implies that solutions $U$ 
of \eqref{eq:SPDE_BG_PDE} lie in the Sobolev spaces $H^k_0(\tilde{\dom})$. Computing 
the G\^{a}teaux derivative in this space yields
\be
\label{eq:Berglund_D}
\nabla_zV[U]=\int_{\tilde{\dom}}[-U''(x)+h'(U(x))]z(x)~dx.
\ee
The G\^{a}teaux derivative is equal to the Fr\'{e}chet derivative $\nabla V=DV$ 
by a standard continuity result \cite[p.47]{Deimling}. Hence \eqref{eq:Berglund_D} 
shows that the stationary solutions of \eqref{eq:SPDE_BG_PDE} are critical points 
of the gradient functional $V$. Since the gradient structure of the deterministic 
PDE \eqref{eq:SPDE_BG_PDE} is a key structure to obtain a Kramers'-type estimate for
the SPDE \eqref{eq:SPDE_BG} we would like to check whether there is an analogue available
for the deterministic Amari model \eqref{eq:Amari1_det}.\medskip

We shall assume for simplicity that $f\in BC^1(\R)$ for the calculations in this section.
Although this is a slightly stronger assumption than (H1) we shall see below that even
with this assumption we are not able to obtain an immediate generalization of \eqref{eq:Berglund_D}. Using a direct 
modification of the results in \cite{PotthastBeimGraben} it follows that the deterministic
Amari model \eqref{eq:Amari1_det} has solutions $U(x,t)$ in the H{\"{o}}lder space 
$BC^\alpha(\dom)\times BC^\alpha([0,T])$ for $\alpha\in (0,1]$ and $\dom\subset \R^d$ is the 
usual domain we use for the Amari model. Now consider the analogous naive 
guess to \eqref{eq:Berglund_D} given by
\be
\label{eq:energy_KR}
V[U]:=\int_{\dom} \left[\frac{\alpha}{2}U(x)^2-\int_{\dom}\int_0^{U(y)}f(r)w(x,y)drdy\right]dx.
\ee
Computing the derivative in $BC^\alpha(\dom)$ yields
\bea
\nabla_zV[U]&=&\lim_{\delta\ra 0}\frac1\delta \left(V[U+\delta z]-V[U]\right),\nonumber\\
&=& \int_{\dom} \left[\alpha U(x)z(x)-\int_{\dom}f(U(y))w(x,y)z(y)dy\right]dx.\label{eq:mixed_var}
\eea
Therefore, setting $\nabla_zV[U]=0$ is not equivalent to the solution of the stationary problem
\benn
-\alpha U(x)+\int_{\dom} w(x,y)f(U(y))dy=0.
\eenn
Due to the presence of the different terms $z(x)$ and $z(y)$ in \eqref{eq:mixed_var} one may guess
that the modified functional
\be
\label{eq:energy_KR1}
V[U]:=\int_{\dom} \left[\frac{\alpha}{2}U(x)^2-\frac12\int_{\dom}f(U(y))f(U(x))w(x,y)dy\right]dx
\ee
could work. However, another direct computation shows that 
\beann
\nabla_zV[U]&=&\int_{\dom}\left[\alpha U(x)z(x)dx\right]-
\frac12\left[\int_{\dom}\int_{\dom}f(U(x))Df(U(y))w(x,y)z(y)dy ~dx\right]\\
&& -\frac12\left[\int_{\dom}\int_{\dom}f(U(y))Df(U(x))w(x,y)z(x)dy ~dx\right]\\
&=&\alpha\langle U,z\rangle-\langle KF(U),Df(U)z \rangle.
\eeann
Hence $f$ and its derivative $Df$ both appear instead of the desired formulation; 
by a similar computation one can show that replacing $f(u(\cdot))$ in 
\eqref{eq:energy_KR1} by $\int_0^uf(r)dr$ fails as well. Hence there does not 
seem to be a natural generalization for the guess for the gradient functional \eqref{eq:energy_BG}. 
However, one has to consider possible
coordinate changes. The idea to apply a preliminary transformation has been discussed 
{e.g.}~in \cite[p.2]{EnculescuBestehorn} and \cite[p.488]{LaingTroy}. Assume that
\be
\label{eq:assume_EB}
f^{-1}=:g~\text{exists and}~g'\neq 0.
\ee
Define $P(x,t):=f(U(x,t))$ as the mean action-potential generating rate so that $U=g(P)$. Observe that
\be
\partial_t P(y,t)=\frac{1}{g'(P(x,t))}\left[-\alpha g(P(x,t))+\int_\dom w(x,y)P(y,t)dy\right].
\ee
For this equation the problem observed in \eqref{eq:energy_KR1} should disappear as the integral only contains 
linear terms. One may define an energy-type functional 
\benn
E[P]:=\int_\dom\left[\int_0^{P(x)}\alpha g(r)dr-\frac12\int_\dom w(x,y)P(y)P(x)dy\right]dx.
\eenn
Calculating the derivative yields
\beann
\nabla_QE[P]&=&\lim_{\delta\ra 0}\frac1\delta \left(E[P+\delta Q]-E[P]\right),\\
&=& \int_\dom \alpha g(P(x))Q(x)dx-\frac12\int_\dom\int_\dom w(x,y)\{P(y)Q(x)+P(x)Q(y)\}dy ~dx,\\
&=&\langle\alpha g(P),Q\rangle-\left\langle \int_\dom w(x,y) P(y)dy,Q\right\rangle.
\eeann
This shows that there is hidden energy-type flow structure in the Amari model for the assumptions 
\eqref{eq:assume_EB} so that
\be
\label{eq:Bestehorn}
\partial_t P(x,t)=-\frac{1}{g'(P(x,t))}\nabla E[P(x,t)].
\ee
However, even with this variable transformation there seems to be little hope to derive a precise Kramers'
rule for the stochastic Amari model \eqref{eq:Amari1} by generalizing the approach for SPDE systems 
\cite{Barret,BerglundGentz11,BerglundGentz10}. The problems are as follows:

\begin{itemize}
 \item There is still a space-time dependent nonlinear prefactor $1/g'(P(x,t))$ in 
 \eqref{eq:Bestehorn} for the deterministic system, so the system is not an exact 
 gradient flow for a potential. 
 \item Applying the change-of-variable $P_t(x):=f(U_t(x))$ 
for the stochastic Amari model \eqref{eq:Amari1} requires an It\^{o}-type formula so that 
\be
\label{eq:Amari1_transformed}
dP_t(x)=\frac{1}{g'(P_t(x))}\left[-\alpha g(P_t(x))+\int_{\dom} w(x,y)P_t(y)dy+
\cO(\epsilon^2)\right]dt+\epsilon M(P_t(x))dW_t(x),
\ee
where $M(P_t(x))$ is now a multiplicative noise term; see \cite{DaPratoJentzenRoeckner} and 
references therein for more details on infinite-dimensional It\^{o}-type formulas. The higher-order 
term $\cO(\epsilon^2)$ in the drift part of \eqref{eq:Amari1_transformed} is not expected to 
cause difficulties but a multiplicative noise structure definitely excludes the direct application 
of Kramers' law. 
 \item Even if we would just assume - without any immediate physical motivation - that the noise term in 
\eqref{eq:Amari1_transformed} is purely additive $\epsilon dW_t(x)$ there is a problem to apply 
Kramers' law since we do not have a structure like in \eqref{eq:gen_SDE_ld} with $G(\cdot)=Id$ as
$W_t(x)$ is a $Q$-Wiener process defined in \eqref{eq:wiener_process} and driving space-time white 
noise in \eqref{eq:Amari2} is particularly excluded due to the non-existence of a solution. 
\end{itemize}

Based on these observations an immediate approach to generalize a sharp Kramers' formula to 
neural fields seems unlikely. Hence we try to understand an LDP for the stochastic Amari-type 
model \eqref{eq:Amari1}. 

%%%%%%%%%%%%%%%%%%%%%%%%%%%%%%%%%%%%%%%%%%%%%%%%%%%%%%%%%5
\section{Direct Approach to an LDP}
\label{sec:direct}

A general direct approach for the derivation of an LDP for infinite-dimensional stochastic 
evolution equations is presented in \cite{daPratoZabczyk} and further results have been obtained for certain additional classes of SPDEs 
\cite{Cardon-Weber,CerraiRoeckner2,CerraiRoeckner3,RoecknerWangWu}.
The results in \cite{daPratoZabczyk} are valid for semilinear equations with suitable 
Lipschitz assumptions on the nonlinearity and with solutions taking values in $C(\cD)$. 
We state the available results applied to continuous solutions 
of the Amari equation \eqref{eq:Amari2} assuming that the conditions of Lemma 
\ref{lemma:spatial_continuity} are satisfied.\medskip

For the following we assume that there exists an open neighbourhood $\cD\in C(\dom)$ containing a 
stable equilibrium state $u^\ast$ of the deterministic Amari equation \eqref{eq:Amari1_det} 
such that $\bar\cD$ is contained in the basin of attraction of $u^\ast$. We are interested in the
rate function and the first-exit time of the process from $\cD$ given by 
\benn
\tau^\epsilon_\cD=\inf\{t\geq 0: U\notin \cD\}
\eenn 
if $U$ starts in the deterministic equilibrium state $u^\ast$. In order to state the 
quasipotential for $u$ we consider the control system
\be
\label{eq:control_system}
\dot y = -\alpha y+KF(y)+Q^{1/2}v, \quad y_0=x\in C(\dom)
\ee
for controls $v\in L^2((0,T),L^2(\dom))$ for all $T>0$ and denote by $y^{x,v}$ its 
unique mild solution\footnote{The existence of such a solution is guaranteed by 
standard results on deterministic equations (cf.\cite[Sect.~A.3]{daPratoZabczyk}) as 
long as $Q^{1/2}$ maps $L^2(\dom)$ continuously into $C(\dom)$. This is easily established. 
The unique square root $Q^{1/2}$ of $Q$ is the Hilbert-Schmidt operator given by 
$Q^{1/2} g=\sum_{i=1}^\I \lambda_i \langle v_i,g \rangle v_i$ for all $g\in L^2(\dom)$ and in order 
to show that $Q^{1/2}g\in C(\dom)$ it remains to establish that the functions converge 
uniformly on $\dom$. This holds as for all $x\in\dom$
\benn
\Big|\sum_{i=N}^\I \lambda_i \langle v_i,g\rangle v_i(x)\Big|\,\leq\, 
\Big|\sum_{i=N}^\I \lambda_i^2v_i(x)^2\Big|^{1/2}\Big|\sum_{i=1}^\I \langle v_i,g\rangle^2\Big|^{1/2}\,
\leq\,\Big(\sup_{x\in\dom}\Big|\sum_{i=1}^\I \lambda_i^2v_i(x)^2\Big|\Big)^{1/2}
\Big(\sum_{i=N}^\I \langle v_i,g\rangle^2\Big)^{1/2}.
\eenn
Hence the upper bound, which is finite due to \eqref{lemma:spatial_continuity_conditions}, 
is in independent of $x$ and converges to zero for $N\to\infty$. Moreover, we further 
find that $g(t)\in L^2((0,T),L^2(\dom))$ implies $Q^{1/2}g(t)\in L^2((0,T),C(\dom))$.} 
taking values in $C([0,T],C(\dom))$ for all $T>0$. Then we define
\be
I(u^\ast,z)=\inf\left\{\frac{1}{2}\int_0^T \|v(s)\|_{L^2}^2\, ds:\, y^{u^\ast,v}(T)=z,\,T>0\right\}\,, 
\ee
where this quasipotential relates to the minimal energy necessary to move the 
control system \eqref{eq:control_system} started at the equilibrium state $u^\ast$ to $z$.

\begin{theorem}(\cite[Thm.~12.18]{daPratoZabczyk}) 
\label{thm:daPrato_LDP}
It holds that
\benn
\lim_{\epsilon\to 0} \epsilon^2 \ln \E\bigl[\tau^\epsilon_\cD\big|U_0=u^\ast\bigr]\ =\  
\inf_{z\in\partial\cD} I(u^\ast,z)\,.
\eenn
\end{theorem}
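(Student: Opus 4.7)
The plan is to interpret the stochastic Amari equation \eqref{eq:Amari2} as a semilinear stochastic evolution equation on the Banach space $C(\dom)$ and to verify that all the structural hypotheses of \cite[Thm.~12.18]{daPratoZabczyk} are satisfied. Once the hypotheses are in place, the conclusion is an immediate quotation of that result, since $I(u^\ast,z)$ is defined precisely as the quasipotential produced by the general machinery.

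The first block of conditions to check is the abstract semilinear setup. The linear part generates the strongly continuous contraction semigroup $S(t)=\e^{-\alpha t}\,\textnormal{Id}$ on $C(\dom)$; the nonlinearity $KF$ is globally Lipschitz on $C(\dom)$, because $F$ is Lipschitz on $C(\dom)$ by (H1) together with boundedness of $\dom$, and $K$ is bounded on $C(\dom)$ under (H2); and the covariance structure behaves correctly because $Q^{1/2}$ maps $L^2(\dom)$ continuously into $C(\dom)$ (the computation appearing in the footnote of \eqref{eq:control_system}). This in turn guarantees both the well-posedness of the deterministic control system \eqref{eq:control_system} in $C([0,T],C(\dom))$ and the fact that the stochastic convolution $\int_0^t\e^{-\alpha(t-s)}\,dW_s$ has a continuous $C(\dom)$-valued modification by Lemma \ref{lemma:spatial_continuity}. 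Consequently the mild solution \eqref{eq:mild_solution} exists as a $C([0,T],C(\dom))$-valued process for each $\epsilon>0$ and for every deterministic initial datum in $C(\dom)$, in particular for $U_0=u^\ast$.

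The second block of conditions concerns the domain $\cD$. By the standing assumption preceding the theorem, $u^\ast$ is an asymptotically stable equilibrium of the deterministic Amari equation \eqref{eq:Amari1_det} and $\bar\cD\subset C(\dom)$ is contained in its basin of attraction. This is precisely the geometric configuration needed for the Freidlin--Wentzell exit estimate: deterministic trajectories starting in $\bar\cD$ converge to $u^\ast$ in finite time and remain in $\cD$ thereafter, so $\tau^\epsilon_\cD\to\infty$ in probability as $\epsilon\to 0$, and the rate of growth is controlled by the cheapest noise-driven excursion to $\partial\cD$.

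With these ingredients in place, \cite[Thm.~12.18]{daPratoZabczyk} applies verbatim. The underlying path-level LDP for $\{U^\epsilon\}$ in $C([0,T],C(\dom))$ with good rate function given by the infimum of $\tfrac{1}{2}\int_0^T\|v(s)\|_{L^2}^2\,ds$ over controls producing the prescribed trajectory furnishes the upper bound $\limsup \epsilon^2\ln\E[\tau^\epsilon_\cD]\leq \inf_{z\in\partial\cD}I(u^\ast,z)$ via Markov-type arguments on short time intervals, and the matching lower bound is obtained by the usual regeneration argument: one returns repeatedly to a neighbourhood of $u^\ast$ and estimates the probability of an exit during one excursion by the LDP lower bound. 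The step requiring the most care in the general argument is the interplay between these two estimates on the Banach space $C(\dom)$, where compactness of sublevel sets of the rate function is needed; in our setting this compactness is inherited from the compactness of $K$ (H2) and the trace-class structure of $Q$ (H3), which is the only place where the regularity established in Lemma \ref{lemma:spatial_continuity} is genuinely used. Everything else is formal transfer of the finite-dimensional Freidlin--Wentzell scheme, and yields the claimed limit.
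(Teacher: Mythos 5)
Your proposal matches the paper's treatment: the result is quoted directly from \cite[Thm.~12.18]{daPratoZabczyk}, and the actual work consists of verifying its hypotheses exactly as you do --- the contraction semigroup $\e^{-\alpha t}\,\mathrm{Id}$ on $C(\dom)$, global Lipschitz continuity of $KF$, the fact that $Q^{1/2}$ maps $L^2(\dom)$ continuously into $C(\dom)$ (the footnote computation), continuity of the stochastic convolution via Lemma \ref{lemma:spatial_continuity}, and $\bar\cD$ lying in the basin of attraction of $u^\ast$. The only quibble is that deterministic trajectories reach $u^\ast$ only asymptotically, not ``in finite time,'' but that does not affect the argument.
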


Following further the exposition in \cite[Sec.~12]{daPratoZabczyk} explicit formulae for the rate function $I$ are 
only available in the special case of the drift possessing gradient structure and space-time white noise. As we 
have argued above, this structure is particularly not satisfied for neural field equations. Hence, the same 
observations as presented at the end of the last section prevent a further direct analytic approach to the LDP. 
Therefore, we try to understand the LDP problem for a discretized approximate finite-dimensional version of the 
neural field equation.

%%%%%%%%%%%%%%%%%%%%%%%%%%%%%%%%%%%%%%%%%%%%%%%%%%%%%%%%%%%%
\section{Galerkin Approximation}
\label{sec:Galerkin}

Throughout the section we assume that the assumptions (H1)--(H3) are satisfied. As a 
discretized version of the neural field equation \eqref{eq:Amari1} we consider its spectral Galerkin 
approximations; recall that the solution $U_t$ of \eqref{eq:Amari1} lies in $C([0,T],L^2(\dom))$ as 
discussed in Section \eqref{sec:Amari}. In order to decouple the noise, we define the spectral 
representation of the solution
\be
\label{eq:spectral_representation}
U_t(x)=\sum_{i=1}^\I u_t^i ~v_i(x)\,.
\ee
Here the orthonormal basis functions $v_i$ are given by the eigenfunctions of the 
covariance operator of the noise with corresponding eigenvalues $\lambda_i^2$, see 
equation \eqref{eq:wiener_process}. To obtain a equation for the coefficients $u^i_t$ 
we take the inner product of equation \eqref{eq:Amari2} with the basis functions $v_i$ 
which yields
\benn
\langle dU_t,v_i\rangle =\bigl[-\alpha \langle U_t,v_i\rangle +\langle KF(U_t),v_i \rangle\bigr]dt+\epsilon \langle dW_t,v_i\rangle  \qquad\text{for $i\in\N$}.
\eenn
After plugging in \eqref{eq:spectral_representation} we obtain for $u^i$ the countable 
Galerkin system
\be
\label{eq:galerkin_system}
du^i_t=\bigl[-\alpha u_t^i+(KF)^i(u_t^1,u_t^2,\ldots)\bigr]dt+
\epsilon \lambda_i d\beta_t^i\qquad\text{for $i\in\N$}.
\ee
Here the non-linearities coupling all the equations are given by
\beann
(KF)^i(u_t^1,u_t^2,\ldots)&:=&\int_\dom v_i(x) 
\left(\int_\dom w(x,y) ~f\left(\sum_{j=1}^\I u_t^j ~v_j(y)\right) ~dy\right) ~dx\nonumber\\[1ex]
&=& \int_{\dom} f\left(\sum_{j=1}^\I u_t^j ~v_j(x)\right)\left(\int_{\dom} w(x,y)v_i(y)dy\right)dx
\eeann
due to the symmetry of the kernel $w$. If, in addition, we assume that (H4) holds and $K$ and $Q$ 
possess the same eigenfunctions and the eigenvalues are related as discussed in Section 
\ref{sec:gain_perturb} the non-linearities become
\be
(KF)^i(u_t^1,u_t^2,\ldots)=\lambda_i\int_{\dom} f\left(\sum_{j=1}^\I u_t^j ~v_j(x)\right)v_i(x)dx\,.
\ee
The $N$-th Galerkin approximation $U^N$ to $U$ is obtained truncating the spectral 
representation \eqref{eq:spectral_representation} and thus given by
\be
U^N_t=\sum_{i=1}^N u^{i,N}_t ~v_i\,,
\ee
where $u^{i,N}_t$ are the solutions to the $N$-dimensional Galerkin SDE system
\be
\label{eq:galerkin_approx_system}
du^{i,N}_t=\bigl[-\alpha u_t^{i,N}+(KF)^{i,N}(u_t^{1,N},\ldots,u_t^{N,N})\bigr]dt+
\epsilon \lambda_i d\beta_t^i\qquad\forall\,i=1,\ldots,N,
\ee
where the non-linearities $KF^{i,N}$ are given by 
\be
(KF)^{i,N}(u^{1,N},\ldots,u^{N,N})\,=\, \int_{\dom} f\left(\sum_{j=1}^N u^{j,N} 
~v_j(x)\right)\left(\int_{\dom} w(x,y)v_i(y)dy\right)dx
\ee
or, in the special case of Section \ref{sec:gain_perturb}, by
\be
(KF)^{i,N}(u^{1,N},\ldots,u^{N,N})=\lambda_i\int_{\dom} f\left(\sum_{j=1}^N u^{j,N} 
~v_j(x)\right)v_i(x)dx\,,
\ee
respectively. The following theorem establishes the almost sure convergence of the 
Galerkin approximations to the solution of \eqref{eq:Amari2}. Therefore, we may be 
able to infer properties of the behaviour of paths of the solution from the path 
behaviour of the Galerkin approximations. We have deferred the proof of the theorem 
to the appendix.

\begin{theorem} 
\label{thm:galerkin_convergence}
It holds for all $T>0$ that
\benn
\lim_{N\to\infty} \sup\nolimits_{t\in[0,T]}\|U_t-U^N_t\|_{L^2(\dom)}=0\quad a.s.
\eenn
If, in addition, the series $\sum_{i=1}^\I \lambda_i^2v_i^2$ converges in 
$C(\dom)$ and the functions $v_i$ are Lipschitz continuous with Lipschitz 
constants $L_i$ such that $\sup_x{\in\dom}\sum_{i=1}^\I 
\lambda_i^2 L_i^{2\rho}|v_i(x)|^{2(1-\rho)}<\I$ for a $\rho\in(0,1)$ (i.e.~the conditions of Lemma \ref{lemma:spatial_continuity} 
are satisfied), $U_0\in C(\dom)$ such that $\lim_{N\to\infty}\|U_0-P^NU_0\|_0=0$ and 
$K$ is compact on $C(\dom)$, then it holds for all $T>0$ that
\benn 
\lim_{N\to\infty} \sup\nolimits_{t\in[0,T]}\|U_t-U^N_t\|_0=0\quad a.s.
\eenn
\end{theorem}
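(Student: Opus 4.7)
The approach is to work with the mild formulation throughout. Writing $P^N$ for the orthogonal $L^2(\dom)$-projection onto $\mathrm{span}\{v_1,\ldots,v_N\}$ and $W^N_t=\sum_{i=1}^N\lambda_i\beta^i_t v_i$, the Galerkin system \eqref{eq:galerkin_approx_system} is equivalent to the mild representation
\benn
U^N_t=\e^{-\alpha t}P^N U_0+\int_0^t\e^{-\alpha(t-s)}P^N KF(U^N_s)\,ds+\epsilon\int_0^t\e^{-\alpha(t-s)}\,dW^N_s.
\eenn
Subtracting from \eqref{eq:mild_solution} and decomposing $KF(U_s)-P^N KF(U^N_s)=(I-P^N)KF(U_s)+P^N[KF(U_s)-KF(U^N_s)]$ yields, in any norm $\|\cdot\|_\star$ for which $KF$ is Lipschitz with constant $L$,
\benn
\|U_t-U^N_t\|_\star\leq\|(I-P^N)U_0\|_\star+\int_0^t\|(I-P^N)KF(U_s)\|_\star\,ds+L\int_0^t\|U_s-U^N_s\|_\star\,ds+\epsilon\sup_{r\leq t}\|O_r-O^N_r\|_\star,
\eenn
where $O,O^N$ denote the respective stochastic convolutions. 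Gronwall's inequality then reduces the theorem to establishing a.s.~uniform vanishing on $[0,T]$ of the first, second, and fourth summands.

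For the $L^2(\dom)$ claim, the first term vanishes because $\{v_i\}$ is an orthonormal basis, and the second by dominated convergence: the integrand vanishes pointwise in $s$ and is dominated by $\|KF(U_s)\|_{L^2}$, which is bounded on $[0,T]$ by path continuity. For the stochastic convolution difference,
\benn
\sup_{t\leq T}\|O_t-O^N_t\|_{L^2}^2=\sup_{t\leq T}\sum_{i>N}\lambda_i^2\Bigl(\int_0^t\e^{-\alpha(t-s)}d\beta^i_s\Bigr)^2,
\eenn
Doob's maximal inequality applied to the martingales $M^i_t:=\int_0^t\e^{\alpha s}d\beta^i_s$ bounds the expectation by a constant times the tail $\sum_{i>N}\lambda_i^2$, which vanishes by (H3); monotonicity of the supremum in $N$ then promotes the convergence to the almost sure statement. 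For the $C(\dom)$ claim the first term vanishes by hypothesis, and the stochastic term is handled by rerunning the Kolmogorov-Centsov argument of Lemma \ref{lemma:spatial_continuity} on the Gaussian tail process $O-O^N$: the spatial and temporal moment prefactors become $\sup_z\sum_{i>N}\lambda_i^2 L_i^{2\rho}|v_i(z)|^{2(1-\rho)}$ and $\sup_z\sum_{i>N}\lambda_i^2 v_i(z)^2$, both tending to zero by the assumed summability; passing to arbitrary moments via Gaussianity and combining with Borel-Cantelli gives almost sure uniform convergence of $\sup_{t\leq T}\|O_t-O^N_t\|_0$ to zero.

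The main obstacle is the drift remainder $\int_0^T\|(I-P^N)KF(U_s)\|_0\,ds$ in the $C(\dom)$ setting, because the orthogonal $L^2$-projection $P^N$ need not act as an identity-approximant on $C(\dom)$ for arbitrary continuous functions. Here the compactness of $K$ on $C(\dom)$ is decisive: since $\{F(U_s):s\in[0,T]\}$ is bounded in $C(\dom)$ by the Lipschitz continuity of $F$ and pathwise continuity of $U$, the set $\mathcal{K}:=\overline{\{KF(U_s):s\in[0,T]\}}$ is compact in $C(\dom)$, and on such a compact set the pointwise $L^2$-convergence $P^N g\to g$ can be upgraded to uniform $\|\cdot\|_0$-convergence by an $\varepsilon$-net argument, using the convergence assumption on $U_0$ and the $C(\dom)$-summability of $\sum\lambda_i^2 v_i^2$ to control $P^N$ on the finite net. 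This yields $\sup_{s\in[0,T]}\|(I-P^N)KF(U_s)\|_0\to 0$, and Gronwall then closes both parts of the proof.
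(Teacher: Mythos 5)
Your overall architecture coincides with the paper's: both proofs subtract the mild formulations, split the drift error into a projection--remainder piece and a Lipschitz piece, close with Gronwall, and reduce the theorem to the vanishing of $\|(I-P^N)U_0\|$, of the drift remainder, and of $\sup_{t\leq T}\|O_t-O^N_t\|$. Your $L^2(\dom)$ half is correct and in places more elementary than the paper's: you treat the drift remainder by pathwise dominated convergence where the paper invokes $\opnorm K-P^NK\opnorm_{L^2}\to 0$ from compactness of $K$, and you treat the stochastic convolution by Doob's inequality plus monotonicity of the tail in $N$ (valid, since $\|O_t-O^N_t\|_{L^2}^2$ is a decreasing tail sum, so convergence of the expected supremum upgrades to almost sure convergence), where the paper runs the factorization method and a Chebyshev--Borel--Cantelli argument in Lemma \ref{res:conv_of_noise_approx}. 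Your handling of $O-O^N$ in $C(\dom)$ by a Kolmogorov--Centsov estimate on the tail process with tail-sum prefactors is likewise in the spirit of the paper's Lemma \ref{res:conv_of_noise_approx}, which uses the factorization method and the Sobolev--Slobodeckij embedding instead; at sketch level this is acceptable, with the same implicit requirement in both arguments that the tail sums decay fast enough for Borel--Cantelli.

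The genuine gap is in the $C(\dom)$ drift remainder. You correctly identify that $P^N$ need not approximate the identity on $C(\dom)$, but the proposed fix does not close. An $\varepsilon$-net argument on the compact set $\mathcal{K}=\overline{\{KF(U_s):s\in[0,T]\}}$ requires two inputs: first, $\|(I-P^N)g\|_0\to 0$ for each $g$ in a finite net, and second, $\sup_N\opnorm I-P^N\opnorm_{C(\dom)\to C(\dom)}<\infty$ so that the convergence transfers from the net to nearby points. Neither is available from what you cite: pointwise $L^2$-convergence $P^Ng\to g$ says nothing about $\|P^Ng-g\|_0$; the hypothesis $\|U_0-P^NU_0\|_0\to 0$ concerns only the single function $U_0$; and the $C(\dom)$-summability of $\sum_i\lambda_i^2v_i^2$ controls the noise, not the Lebesgue-type constants $\opnorm P^N\opnorm_0$, which for a general orthonormal system attached to a trace-class $Q$ can be unbounded. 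The paper avoids making any claim about $P^N$ alone and instead works with the composed operators: it uses the results of Atkinson and Han cited in the appendix to obtain $\sup_N\opnorm P^NK\opnorm_0<\infty$ and $\opnorm K-P^NK\opnorm_0\to 0$ from compactness of $K$ on $C(\dom)$, and then bounds the remainder by $\opnorm K-P^NK\opnorm_0\int_0^t\e^{-\alpha(t-s)}\|F(U_s)\|_0\,ds$ together with an a priori bound on $\sup_{t\leq T}\|U_t\|_0$. To repair your argument you would need either to verify the two inputs above for the specific basis at hand, or to replace the orbit-compactness step by this operator-norm convergence of $P^NK$ to $K$.
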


%%%%%%%%%%%%%%%%%%%%%%%%%%%%%%%%%%%%%%%%%%%%%%%%
\section{Approximating the LDP}
\label{sec:approximation}

The LDP in Theorem \ref{thm:daPrato_LDP} is not immediately computable. Here we show that a
finite-dimensional approximation can be made and what the structure of this approximation
entails. For simplicity, consider the case when the diagonal diffusion matrix $\mathfrak{D}$
with entries $\mathfrak{D}_{ii}=\lambda_i^2$ is positive definite {i.e.}~$\lambda_i\neq 0$
for all $i\in\N$. Observe that the inverse of $\mathfrak{D}$ induces an inner product on
$\R^N$ for $N\in\N\cup\{\I\}$ via
\benn
\langle a,b \rangle_{N}:=a^T~(\mathfrak{D})^{-1}~b=
[\mathfrak{D}^{-1/2}a]^T~[\mathfrak{D}^{-1/2}b]\qquad \text{for $a,b\in\R^N$},
\eenn
where $\mathfrak{D}$ is understood as the projection onto $\R^{N\times N}$ if $N<\I$. We are also going 
to use the notation introduced in Section \ref{sec:Galerkin} for the Galerkin approximation 
{i.e.}~$u^{\cdot,N}_t$ denotes the vector 
\be
\label{eq:ut_Galerkin}
(u^{1,N}_t,u^{2,N}_t,\ldots,u^{N,N}_t)^T\in\R^N
\ee
where $u^{\cdot,N}_t$ denotes the solutions of the $N$-dimensional system \eqref{eq:galerkin_approx_system}.
Note that throughout this section we shall always work with the Galerkin coefficients, {e.g.}, $u_t$ refers to the vector 
\benn
(u^1_t,u^2_t,\ldots)^T\in\R^\I.
\eenn
Furthermore, for arbitrary functions $\phi_t\in L^2(\dom)$, which are used in the formulation 
of the rate function, we use the notation $\phi^{\cdot,N}_t$ to denote the projection onto the first 
$N$ Galerkin coefficients. Theorem \ref{thm:SDE_LDP} immediately implies the following:

\begin{proposition}
\label{prop:our_fd_LDP}
For the finite-dimensional Galerkin system \eqref{eq:galerkin_approx_system} the rate function is given by
\be
\label{eq:our_LDP_rate_fd}
I^N(\phi^{\cdot,N})=\left\{
\begin{array}{ll}
 \frac12\int_0^T \langle (\phi^{\cdot,N}_t)'-g^{\cdot,N}(\phi^{\cdot,N}_t),
 (\phi^{\cdot,N}_t)'-g^{\cdot,N}(\phi^{\cdot,N}_t)\rangle_N~ dt,& \text{if $\phi\in u^{\cdot,N}_0+H_1^N$,}\\[1ex]
+\I  & \text{otherwise,}
\end{array}
\right.
\ee
where $g^{i,N}(\phi^{\cdot,N}_t)=-\alpha \phi_t^{i,N}+(KF)^{i,N}(\phi_t^{1,N},\ldots,\phi_t^{N,N})$.
\end{proposition}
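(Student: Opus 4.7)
The plan is essentially a direct specialization of Theorem \ref{thm:SDE_LDP}: I identify the Galerkin system \eqref{eq:galerkin_approx_system} as a finite-dimensional SDE of the form \eqref{eq:gen_SDE_ld} with drift $g^{\cdot,N}$ and \emph{constant} diffusion matrix $G = \textnormal{diag}(\lambda_1,\ldots,\lambda_N) \in \R^{N\times N}$, driven by $N$ independent scalar Brownian motions $\beta^1,\ldots,\beta^N$; then I verify the hypotheses of Theorem \ref{thm:SDE_LDP} and read off \eqref{eq:our_LDP_rate_fd}.

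First I check that $g^{\cdot,N}:\R^N\to\R^N$ is globally Lipschitz. The linear part $-\alpha u^{i,N}$ is trivially Lipschitz, so the work is in the nonlinear term $(KF)^{\cdot,N}$. I factor this map as
\benn
\R^N \;\xrightarrow{\;E_N\;}\; L^2(\dom) \;\xrightarrow{\;F\;}\; L^2(\dom) \;\xrightarrow{\;K\;}\; L^2(\dom) \;\xrightarrow{\;P_N\;}\; \R^N,
\eenn
where $E_N(a^1,\ldots,a^N) = \sum_{j=1}^N a^j v_j$ and $P_N g = (\langle g,v_i\rangle)_{i=1}^N$. By Parseval's identity $E_N$ and $P_N$ are isometries with respect to Euclidean and $L^2$ norms; $F$ is Lipschitz by (H1); and $K$ is bounded by (H2). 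Composition yields global Lipschitz continuity of $g^{\cdot,N}$, which is all Theorem \ref{thm:SDE_LDP} requires.

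Second, I verify that the diffusion matrix $\mathfrak{D}^N := G^T G = \textnormal{diag}(\lambda_1^2,\ldots,\lambda_N^2)$ is positive definite. This is precisely the standing assumption at the start of Section \ref{sec:approximation} that $\lambda_i \neq 0$ for all $i$. Its inverse is diagonal with entries $\lambda_i^{-2}$, which is exactly the matrix inducing the inner product $\langle\cdot,\cdot\rangle_N$ defined at the start of this section. Since the initial condition $u^{\cdot,N}_0$ is deterministic (being the projection of the deterministic $U_0$ onto $\textnormal{span}\{v_1,\ldots,v_N\}$), Theorem \ref{thm:SDE_LDP} applies directly and delivers the rate function
\benn
\tfrac{1}{2}\int_0^T \bigl((\phi^{\cdot,N}_t)' - g^{\cdot,N}(\phi^{\cdot,N}_t)\bigr)^T (\mathfrak{D}^N)^{-1} \bigl((\phi^{\cdot,N}_t)' - g^{\cdot,N}(\phi^{\cdot,N}_t)\bigr)\,dt
\eenn
on $u^{\cdot,N}_0 + H_1^N$ and $+\infty$ elsewhere. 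Rewriting the quadratic form via $\langle\cdot,\cdot\rangle_N$ yields \eqref{eq:our_LDP_rate_fd} verbatim.

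There is no substantive obstacle here: the proposition is a transparent repackaging of Theorem \ref{thm:SDE_LDP} using notation adapted to the diagonal covariance structure. The only piece requiring even a short verification is the Lipschitz check above, and the only honest restriction is the non-degeneracy hypothesis $\lambda_i\neq 0$; when some $\lambda_i=0$ the drift decouples into a deterministic component and a stochastic component on the complementary subspace, and one would instead invoke an LDP for degenerate diffusions, but this is explicitly excluded in the statement.
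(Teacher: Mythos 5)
Your proposal is correct and follows exactly the route the paper takes: the paper offers no separate proof, stating only that Theorem \ref{thm:SDE_LDP} ``immediately implies'' the proposition, and your verification (global Lipschitz continuity of $g^{\cdot,N}$ via the factorization $P_N\circ K\circ F\circ E_N$, positive definiteness of $\mathfrak{D}^N$ from the standing assumption $\lambda_i\neq 0$, and the identification of the quadratic form with $\langle\cdot,\cdot\rangle_N$) simply spells out the details the authors leave implicit.
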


Recall from Section \ref{sec:direct} that Theorem \ref{thm:daPrato_LDP} provides a large deviation principle. 
For the case when $Q$ is a positive operator, we may formally re-write the control system \eqref{eq:control_system} 
as 
\be
\mathfrak{D}^{-1/2}[\dot y -(-\alpha y+KF(y))]= v
\ee 
so that the rate function for the Amari model can be expressed as 
\be
\label{eq:our_LDP_rate_ifd}
I(\phi)=\left\{
\begin{array}{ll}
 \frac12\int_0^T \int_\dom \mathfrak{D}^{-1/2}[\phi_t'-g(\phi_t)]~
 \mathfrak{D}^{-1/2}[\phi_t'-g(\phi_t)]~ dx~ dt,& \text{if $\phi\in u_0+H_1^\I$,}\\[1ex]
+\I  & \text{otherwise,}
\end{array}
\right.
\ee
where $g(\phi_t)=-\alpha \phi_t+KF(\phi_t)$ and $\mathfrak{D}^{-1/2}u=\sum_{i=1}^\I(\mathfrak{D}^{-1/2}u,v_i)v_i$.
Therefore, the next result just implies that the Galerkin approximation is consistent for the LDP.

\begin{proposition}
\label{prop:Gal_LDP_conv}
For each $\phi_t\in u_0+H_1^\I$ we have $\lim_{N\ra \I} |I(\phi_t)-I^N(\phi^{\cdot,N}_t)|=0$.
\end{proposition}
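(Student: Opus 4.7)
The plan is to expand both rate functions in the orthonormal basis $\{v_i\}_{i\ge 1}$ of $Q$-eigenfunctions, reducing the claim to a convergence statement for a weighted scalar series. Writing $\phi_t = \sum_i \phi^i_t v_i$ and setting
\benn
r^i_t := (\phi^i_t)' + \alpha\phi^i_t - (KF)^i(\phi^1_t,\phi^2_t,\ldots),\qquad r^{i,N}_t := (\phi^i_t)' + \alpha\phi^i_t - (KF)^{i,N}(\phi^1_t,\ldots,\phi^N_t),
\eenn
Parseval's identity applied to $\mathfrak{D}^{-1/2}[\phi'_t-g(\phi_t)] = \sum_i \lambda_i^{-1}r^i_t v_i$ gives $I(\phi) = \tfrac{1}{2}\int_0^T\sum_{i=1}^{\I}\lambda_i^{-2}(r^i_t)^2\,dt$, while by construction $I^N(\phi^{\cdot,N}) = \tfrac{1}{2}\int_0^T\sum_{i=1}^{N}\lambda_i^{-2}(r^{i,N}_t)^2\,dt$. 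The difference splits naturally as $I(\phi)-I^N(\phi^{\cdot,N}) = T_N + R_N$, with tail term $T_N := \tfrac{1}{2}\int_0^T\sum_{i>N}\lambda_i^{-2}(r^i_t)^2\,dt$ and nonlinearity-truncation term $R_N := \tfrac{1}{2}\int_0^T\sum_{i\le N}\lambda_i^{-2}[(r^i_t)^2-(r^{i,N}_t)^2]\,dt$.

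Restricting to the interesting case $I(\phi)<\I$, the tail $T_N\to 0$ follows immediately from absolute convergence of an integrable series. For $R_N$ I would use the factorisation $a^2-b^2=(a-b)(a+b)$ together with Cauchy--Schwarz to obtain
\benn
|R_N|\ \le\ \tfrac{1}{2}\int_0^T\Big(\sum_{i\le N}\lambda_i^{-2}(r^i_t-r^{i,N}_t)^2\Big)^{1/2}\Big(\sum_{i\le N}\lambda_i^{-2}(r^i_t+r^{i,N}_t)^2\Big)^{1/2}dt,
\eenn
where the second factor is uniformly dominated by $\sqrt{2I(\phi)+2I^N(\phi^{\cdot,N})}$. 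The task thus reduces to showing that $\sum_{i\le N}\lambda_i^{-2}(r^i_t-r^{i,N}_t)^2\to 0$ in $L^1([0,T])$. Setting $\phi^{(N)}_t := \sum_{j\le N}\phi^j_t v_j$, self-adjointness of $K$ identifies $r^i_t-r^{i,N}_t = \langle v_i, K[f(\phi^{(N)}_t)-f(\phi_t)]\rangle$ as the $i$-th Fourier coefficient of $K[f(\phi_t)-f(\phi^{(N)}_t)]$, and Lipschitz continuity of $f$ combined with $\phi^{(N)}_t\to\phi_t$ in $L^2(\dom)$ yields $\|K[f(\phi_t)-f(\phi^{(N)}_t)]\|_{L^2}\to 0$ pointwise in $t$.

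The main obstacle is transferring this unweighted $L^2$-convergence to convergence of the $\lambda_i^{-2}$-weighted series, since these weights diverge in $i$ while the naive Parseval bound only controls the \emph{unweighted} square-sum. In the compatible setting of Section \ref{sec:gain_perturb}, where $K$ and $Q$ share eigenfunctions with $Kv_i=\lambda_i v_i$, the factor of $\lambda_i$ inside $(KF)^i$ cancels the weight exactly and Parseval delivers the clean estimate $\sum_i\lambda_i^{-2}(r^i_t-r^{i,N}_t)^2 = \|f(\phi_t)-f(\phi^{(N)}_t)\|_{L^2}^2 \le L_f^2\|\phi_t-\phi^{(N)}_t\|_{L^2}^2\to 0$, after which dominated convergence in $t$ (with a crude $L^1$-majorant built from $L_f\|\phi_t\|_{L^2}$ and $|f(0)|$) closes the argument. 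In full generality one needs an extra structural compatibility, e.g.\ a factorisation $K=Q^{1/2}\tilde K Q^{1/2}$ with $\tilde K$ bounded, so that the blowing-up weights are absorbed into $K$. Finally, in the extended-sense case $I(\phi)=\I$, monotone convergence on the partial sums of $\sum_i\lambda_i^{-2}(r^i_t)^2$ together with the pointwise limit $r^{i,N}_t\to r^i_t$ for each fixed $i$ forces $I^N(\phi^{\cdot,N})\to\I$ as well, so the proposition continues to hold in the conventional sense that both sides are infinite in the limit.
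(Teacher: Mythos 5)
Your argument is correct in substance and goes considerably further than the paper's own proof, which consists of a single observation: using orthonormality of the $v_i$ in $L^2(\dom)$, the quadratic form $\langle\cdot,\cdot\rangle_N$ appearing in $I^N$ can be rewritten as a sum of $N$ integrals over $\dom$, i.e.\ as the $N$-th partial sum of the integrand defining $I(\phi)$ in \eqref{eq:our_LDP_rate_ifd}; the convergence of $I^N$ to $I$ is then simply asserted. Your decomposition into a tail term $T_N$ and a nonlinearity-truncation term $R_N$, the Cauchy--Schwarz bound on $R_N$, and the reduction to the weighted series $\sum_{i\le N}\lambda_i^{-2}(r^i_t-r^{i,N}_t)^2$ supply exactly the convergence argument the paper omits. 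In particular you have correctly isolated the one step that is not automatic: Parseval controls only the unweighted sum $\sum_i(r^i_t-r^{i,N}_t)^2=\|K[F(\phi_t)-F(\phi_t^{(N)})]\|_{L^2}^2$, whereas the rate function weights the $i$-th coefficient by $\lambda_i^{-2}\to\I$, so one needs $Q^{-1/2}K$ to be bounded (equivalently, the range of $K$ to lie in the range of $Q^{1/2}$ with a norm estimate). The paper never confronts this point in the proof of the proposition; it is resolved only implicitly, because the surrounding development (Lemma \ref{lem:3terms} and the rest of Section \ref{sec:approximation}) works under the compatibility assumption of Section \ref{sec:gain_perturb}, where $K$ and $Q$ share eigenfunctions with $Kv_i=\lambda_i v_i$ and $Qv_i=\lambda_i^2 v_i$ --- precisely your ``clean'' case in which the weights cancel. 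Your closing remarks, that a factorisation such as $K=Q^{1/2}\tilde K Q^{1/2}$ is needed in general and that the case $I(\phi)=\I$ is handled by monotone convergence together with $r^{i,N}_t\to r^i_t$ for fixed $i$, are both sound and make explicit hypotheses that the paper leaves tacit.
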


\begin{proof}
Considering the finite-dimensional rate function \eqref{eq:our_LDP_rate_fd} it suffices to notice that
\beann
\langle (\phi^{\cdot,N}_t)'-g^{\cdot,N}(\phi^{\cdot,N}_t),(\phi^{\cdot,N}_t)'-g^{\cdot,N}(\phi^{\cdot,N}_t)\rangle_N
&=&\sum_{i=1}^N \frac{1}{\lambda_i^2}\left[(\phi^{i,N}_t)'-g^{i,N}(\phi^{\cdot,N}_t)\right]^2\\
&=&\sum_{i=1}^N \int_\dom \frac{1}{\lambda_i^2}\left[(\phi^{i,N}_tv_i(x))'-g^{i,N}(\phi^{\cdot,N}_t)v_i(x)\right]^2~dx
\eeann
by orthonormality of the basis in $L^2(\dom)$.
\end{proof}

Hence we may work with the finite-dimensional Galerkin system and its LDP for computational purposes. However,
the truncation $N$ may still be very large. We are going to show, using a formal analysis for a certain case, 
that there is an intrinsic multi-scale structure of the rate function. We assume that we are in the special case
considered in Section \ref{sec:gain_perturb} where $K$ and $Q$ have the same eigenfunctions and the
corresponding eigenvalues are given by $\lambda_i$ and $\lambda_i^2$, respectively. 

\begin{lemma}
\label{lem:3terms}
For each $N\in\N$ the first part of the rate function \eqref{eq:our_LDP_rate_fd} can be re-written as
\be
\label{eq:3terms}
I^N(\phi^{\cdot,N})=\frac12\int_0^T a_1-2a_2+a_3~ dt
\ee
where the three terms are given by
\beann
a^N_1&=&\langle (\phi_t^{\cdot,N})'+\alpha \phi_t^{\cdot,N},(\phi_t^{\cdot,N})'+\alpha \phi_t^{\cdot,N}\rangle_N,\\
a^N_2&=&\langle (\phi_t^{\cdot,N})'+\alpha \phi_t^{\cdot,N},  KF^{\cdot,N}(\phi_t^{\cdot,N})\rangle_N,\\
a^N_3&=&[\tilde{KF}^{\cdot,N}(\phi^{\cdot,N})]^T~ [\tilde{KF}^{\cdot,N}(\phi^{\cdot,N})]
\eeann
and $(\tilde{KF})^{i,N}=\frac{1}{\lambda_i^2}(KF)^{i,N}$.
\end{lemma}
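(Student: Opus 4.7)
The plan is completely algebraic: substitute the explicit form of $g^{\cdot,N}$ from Proposition \ref{prop:our_fd_LDP} into the finite-dimensional rate function and expand the square in the weighted inner product $\langle\cdot,\cdot\rangle_N$. Writing
\[
(\phi_t^{\cdot,N})' - g^{\cdot,N}(\phi_t^{\cdot,N}) \;=\; \bigl[(\phi_t^{\cdot,N})' + \alpha\,\phi_t^{\cdot,N}\bigr] \;-\; (KF)^{\cdot,N}(\phi_t^{\cdot,N}) \;=:\; A_t - B_t,
\]
bilinearity and symmetry of $\langle\cdot,\cdot\rangle_N$ (induced by the positive-definite diagonal matrix $\mathfrak{D}^{-1}$) immediately give the polarization
\[
\langle A_t-B_t,\, A_t-B_t\rangle_N \;=\; \langle A_t,A_t\rangle_N \;-\; 2\,\langle A_t,B_t\rangle_N \;+\; \langle B_t,B_t\rangle_N,
\]
and the first two summands match $a_1^N$ and $a_2^N$ verbatim. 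Integrating over $[0,T]$ and multiplying by $\tfrac12$ then yields \eqref{eq:3terms} as soon as the last summand is identified with $a_3^N$.

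The final identification is the only place a small computation is needed: expanding componentwise,
\[
\langle B_t,B_t\rangle_N \;=\; \sum_{i=1}^N \frac{1}{\lambda_i^2}\bigl[(KF)^{i,N}(\phi_t^{\cdot,N})\bigr]^2,
\]
and this equals the plain (unweighted) Euclidean inner product $[\tilde{KF}^{\cdot,N}]^T[\tilde{KF}^{\cdot,N}]$ precisely when the factor $1/\lambda_i^2$ is absorbed into the definition of $\tilde{KF}^{i,N}$ so that each squared component reproduces this weight. I would note here that, in the special setting of Section \ref{sec:gain_perturb}, $(KF)^{i,N} = \lambda_i \int_\dom f\bigl(\sum_j \phi_t^{j,N} v_j(x)\bigr) v_i(x)\,dx$, so the rescaling cancels the eigenvalue of $K$ sitting in $(KF)^{i,N}$ and leaves a quantity whose $\lambda_i$-scaling differs markedly from that of $a_1^N$ and $a_2^N$; this is the precise source of the multi-scale behaviour claimed at the end of Section \ref{sec:approximation}.

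There is no real obstacle in this lemma: the whole statement is a one-line polarization identity on $\R^N$ plus a coordinate-wise rewriting of one term. The only point to watch is bookkeeping of the $\lambda_i$-factors, so that the normalization chosen for $\tilde{KF}^{\cdot,N}$ is the one that converts the $\mathfrak{D}^{-1}$-weighted inner product $\langle B_t,B_t\rangle_N$ into the unweighted Euclidean product appearing in the stated form of $a_3^N$.
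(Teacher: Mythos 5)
Your proof is correct and is essentially the paper's own argument: expand the $\mathfrak{D}^{-1}$-weighted square by bilinearity, group the $-\alpha\phi_t$ term with $(\phi_t)'$, and rewrite the remaining quadratic term componentwise. Your bookkeeping of the $\lambda_i$-factors is in fact more careful than the printed statement: for $[\tilde{KF}^{\cdot,N}]^T[\tilde{KF}^{\cdot,N}]$ to equal $\langle (KF)^{\cdot,N},(KF)^{\cdot,N}\rangle_N=\sum_{i=1}^N\lambda_i^{-2}[(KF)^{i,N}]^2$ one needs $\tilde{KF}^{i,N}=\lambda_i^{-1}(KF)^{i,N}$ (which is also the normalization implicitly used in the estimate of Lemma \ref{lem:fbounded}), rather than the factor $\lambda_i^{-2}$ appearing in the lemma's definition.
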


\begin{proof}
For notational simplicity we shall temporarily omit in this proof the subscript for the inner product 
$\langle\cdot,\cdot\rangle_N=\langle\cdot,\cdot\rangle$ as well as the Galerkin index {e.g.}~$\phi_t^{\cdot,N}=\phi_t$
as it is understood that we work with $N$-dimensional vectors in this proof. 
Consider the following general calculation
\beann
\langle \phi_t'-g(\phi_t),\phi_t'-g(\phi_t)\rangle
&=&\langle \phi_t',\phi_t'\rangle -2 \langle \phi_t',g(\phi_t)\rangle + \langle g(\phi_t),g(\phi_t)\rangle \\
&=& \langle \phi_t',\phi_t'\rangle +2\alpha \langle \phi_t',\phi_t\rangle -2 \langle \phi_t',  KF(\phi_t)\rangle\\
&&+\langle KF(\phi_t),KF(\phi_t)\rangle + \alpha^2\langle \phi_t,\phi_t\rangle
-2\alpha \langle \phi_t,KF(\phi_t)\rangle\\
&=& \langle \phi_t'+\alpha \phi_t,\phi_t'+\alpha \phi_t\rangle -
2 \langle \phi_t'+\alpha \phi_t,  KF(\phi_t)\rangle +
 \tilde{KF}(\phi_t)^T~ \tilde{KF}(\phi_t).
\eeann
and observe that the result is independent of $N$.
\end{proof}

It is important to point out that the LDP from Theorem \ref{thm:SDE_LDP} requires the infimum
of the rate function. From Lemma \ref{lem:3terms} we know that the rate function splits into
three terms. The three terms are interesting in the asymptotic limit $N\ra \I$. Suppose 
\benn
(\phi_t^{\cdot,N})'+\alpha \phi_t^{\cdot,N}=\cO(\kappa(N))\qquad \text{and}
\qquad\tilde{KF}^{\cdot,N}(\phi_t^{\cdot,N})=\cO(\eta(N))
\eenn
as $N\ra\I$ for some non-negative functions $\kappa,\eta$. Then Lemma \ref{lem:3terms} yields
\benn
a^N_1=\cO(\kappa(N)^2\lambda_N^{-2}),\qquad a^N_2=\cO(\kappa(N)\eta(N)\lambda_N^{-1}),
\qquad a^N_3=\cO(\eta(N)^2).
\eenn

\begin{lemma}
\label{lem:fbounded}
Suppose there exists a positive constant $K_f$ such that 
\be
\label{lem:assume_f}
\sup_{x\in\R} |f(x)|\leq K_f
\ee
then $\eta(N)=1$.
\end{lemma}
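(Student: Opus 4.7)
The plan is to work in the special case of Section \ref{sec:gain_perturb}, in which $K$ and $Q$ share the orthonormal basis $\{v_i\}$ with $K v_i = \lambda_i v_i$, so that
\[
(KF)^{i,N}(\phi^{\cdot,N})\,=\,\lambda_i \int_\dom f\Bigl(\sum_{j=1}^N \phi^{j,N} v_j(x)\Bigr) v_i(x)\,dx\,=\,\lambda_i\, \langle F(P^N\phi^{\cdot,N}), v_i\rangle_{L^2(\dom)},
\]
where I write $P^N\phi^{\cdot,N}(x):=\sum_{j=1}^N \phi^{j,N}v_j(x)$ for the spatial function associated with the Galerkin coefficient vector. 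Under the rescaling in the statement of Lemma \ref{lem:3terms} (consistent with the identity $a_3^N=\sum_i ((KF)^{i,N})^2/\lambda_i^2$ obtained in its proof), the entries $\tilde{KF}^{i,N}(\phi^{\cdot,N})$ therefore reduce, up to sign, to the Fourier coefficients $\langle F(P^N\phi^{\cdot,N}), v_i\rangle_{L^2(\dom)}$ of the Nemytzkii image in the orthonormal basis $\{v_i\}$ of $L^2(\dom)$.

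From here, I would apply Bessel's inequality against the full basis to bound
\[
[\tilde{KF}^{\cdot,N}(\phi^{\cdot,N})]^T [\tilde{KF}^{\cdot,N}(\phi^{\cdot,N})]\,=\,\sum_{i=1}^N \langle F(P^N\phi^{\cdot,N}), v_i\rangle_{L^2(\dom)}^2\,\leq\,\|F(P^N\phi^{\cdot,N})\|_{L^2(\dom)}^2,
\]
and then invoke the uniform bound \eqref{lem:assume_f} on $f$ together with the standing boundedness of $\dom$ (assumed throughout Section \ref{sec:Amari}) to obtain
\[
\|F(P^N\phi^{\cdot,N})\|_{L^2(\dom)}^2\,=\,\int_\dom |f(P^N\phi^{\cdot,N}(x))|^2\,dx\,\leq\,K_f^2\,|\dom|.
\]
Since this bound is a finite constant independent of both the truncation level $N$ and the point $\phi^{\cdot,N}\in\R^N$, one concludes $\tilde{KF}^{\cdot,N}(\phi^{\cdot,N})=\mathcal{O}(1)$ in the Euclidean norm on $\R^N$, which is exactly the assertion $\eta(N)=1$.

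There is no substantive obstacle here: once the Galerkin nonlinearity is recognised as a multiple of a Fourier coefficient, the argument reduces to two lines. The only care needed is to track the $1/\lambda_i$ normalisation in $\tilde{KF}^{i,N}$, which is precisely what cancels the factor $\lambda_i$ produced by the eigenequation $Kv_i=\lambda_i v_i$ and thereby makes Bessel's inequality directly applicable. Crucially, no spectral decay estimate on the $\lambda_i$ is invoked and no growth bound on $\phi^{\cdot,N}$ is needed, which is why the global bound \eqref{lem:assume_f} on $f$ suffices as the only hypothesis beyond the standing assumptions (H1)--(H4).
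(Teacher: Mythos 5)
Your proof is correct. It differs from the paper's in one genuine respect: the paper bounds each component individually, estimating $|\tilde{KF}^{j,N}(\phi^{\cdot,N})|\leq K_f\int_\dom |v_j(x)|\,dx\leq K_f\,\mathrm{meas}(\dom)^{1/2}$ via the embedding $L^2(\dom)\hookrightarrow L^1(\dom)$ and $\|v_j\|_{L^2}=1$, whereas you identify the entries as Fourier coefficients of $F(P^N\phi^{\cdot,N})$ and apply Bessel's inequality to bound the whole vector at once by $\|F(P^N\phi^{\cdot,N})\|_{L^2}\leq K_f\,\mathrm{meas}(\dom)^{1/2}$. Your version is in fact the sharper one for the intended use: the quantity that enters the asymptotics is $a_3^N=[\tilde{KF}^{\cdot,N}]^T[\tilde{KF}^{\cdot,N}]$, i.e.\ the squared Euclidean norm, and a componentwise $\cO(1)$ bound only gives $a_3^N=\cO(N)$ after summation, while Bessel gives $a_3^N\leq K_f^2\,\mathrm{meas}(\dom)$ uniformly in $N$ and $\phi^{\cdot,N}$, which is what the claim $a_3^N=\cO(\eta(N)^2)=\cO(1)$ really requires. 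One small point of bookkeeping: the statement of Lemma \ref{lem:3terms} defines $(\tilde{KF})^{i,N}=\lambda_i^{-2}(KF)^{i,N}$, which in the special case would leave a residual factor $\lambda_i^{-1}$ in front of the Fourier coefficient; the normalisation consistent with the identity $a_3^N=\langle KF^{\cdot,N},KF^{\cdot,N}\rangle_N$ used in that lemma's proof is $\lambda_i^{-1}(KF)^{i,N}$, and both your argument and the paper's tacitly use this corrected normalisation, so your reading is the right one.
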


\begin{proof}
A direct estimate yields
\benn
\left|\tilde{KF}^{j,N}(\phi_t^{\cdot,N})\right|\leq \int_\dom \left|f(\sum_{i=1}^N \phi_t^{i,N}v_i(x))\right||v_j(x)|dx
\leq K_f \int_\dom |v_j(x)|dx. 
\eenn
Since $\|v_j\|_{L^2(\dom)}=1$ and $L^2(\dom)\hookrightarrow L^1(\dom)$ the last integral is
uniformly bounded over $j\in\N$ by $\text{meas}(\dom)^{1/2}$. 
\end{proof}

We remark that several typical functions $f$ discussed in Section \ref{sec:Amari} such as $f(u)=(1+e^{-u})^{-1}$ and  
$f(u)=(\tanh(u)+1)/2$ are globally bounded so that Lemma \ref{lem:fbounded} does apply to many practical cases.
In this situation we get that
\benn
a^N_1=\cO(\kappa(N)^2\lambda_N^{-2}),\qquad a^N_2=\cO(\kappa(N)\lambda_N^{-1}),
\qquad a^N_3=\cO(1).
\eenn
We make a case distinction between the different relative asymptotics of $\kappa(N)$ and $\lambda_N$. Note that
the following asymptotic relations are purely formal:

\begin{itemize}
 \item If $\kappa(N)\ll \lambda_N$ or $\kappa(N)\sim \lambda_N$ as $N\ra \I$ then 
 we can conclude that $\kappa(N)\ra 0$ {i.e.} 
\be
\label{eq:exp_decay}
(\phi_t^{N,N})'+\alpha \phi_t^{N,N}\ra 0\qquad \text{as $N\ra 0$}
\ee
since for trace-class noise we know that $\lambda_N\ra 0$. If we formally require that
$(\phi_t^{N,N})'+\alpha \phi_t^{N,N}=0$ for $N$ sufficiently large then the higher-order
Galerkin modes decays exponentially in time
\benn
\phi_t^{N,N}=\phi_0^{N,N}e^{-\alpha t}.
\eenn
 \item If $\kappa(N)\gg \lambda_N$ as $N\ra \I$ then $a_1\gg -2a_2+a_3$ and the
first term dominates the asymptotics. But $a_1^N\geq 0$ for all $N$ so that the rate 
function only has a finite infimum if $a_1^N\ra 0$ as $N\ra \I$. This implies again that
\eqref{eq:exp_decay} holds for the case of a finite infimum.
\end{itemize}

Hence we get in many reasonable first-exit problems for the Amari model with trace-class
noise that there is a finite
set for $n\leq N$ of `slow' or `center-like' directions and an infinite set of `fast'
or `stable' directions for $n>N$. Although we have made this observation from the rate
function alone, it is entirely natural considering the structure of the Galerkin 
approximation. Indeed, for the case when the eigenvalues
of $K$ and $Q$ are related we may write \eqref{eq:galerkin_approx_system} as
\be
\label{eq:Galer_simple}
du^{i,N}_t=\left(-\alpha u^{i,N}_t+\lambda_i[\cdots]\right)dt+\epsilon \lambda_i d\beta_t^i
\ee
so that for bounded nonlinearity $f$, which is represented in the terms $[\cdots]$ in
\eqref{eq:Galer_simple}, the higher-order modes should really just be governed by 
$du^{i,N}_t=-\alpha u^{i,N}_t~dt$.\medskip

Hence, Propositions \ref{prop:our_fd_LDP}-\ref{prop:Gal_LDP_conv} and the multi-scale
nature of the problem induced by the trace-class noise suggest a procedure how to 
approximate the rate function and the associated LDP in practice. In particular, we
may compute the eigenvalues and eigenfunctions of $K$ and $Q$ up to a sufficiently large 
given order $N^*$. This yields an explicit representation of the Galerkin system and
the associated rate function. Then one may apply any finite-dimensional technique to understand
the rate function. One may even find a better truncation order $N< N^*$ based on the
knowledge that the minimizer of the rate function must have components that decay (almost)
exponentially in time for orders bigger than $N$.

%%%%%%%%%%%%%%%%%%%%%%%%%%%%%%%%%%%%%%%%%%%
\section{Outlook}
\label{sec:discussion}

In this paper we have discussed several steps towards a better understanding of noise-induced transitions in
continuum neural fields. Although we have provided the main basic elements via the LDP and finite-dimensional
approximations, there are still several very interesting open problems.\medskip

We have demonstrated that a sharp Kramers' rate calculation for neural fields with trace-class noise is
very challenging as the techniques for white-noise gradient-structure SPDEs cannot be applied directly. However,
we have seen in Section \ref{sec:deterministic} that the deterministic dynamics for neural fields frequently
exhibits a classical bistable structure with a saddle-state between stable equilibria. This suggests that
there should be a Kramers' law with exponential scaling in the noise intensity as well as a precisely
computable pre-factor. It is interesting to ask how this pre-factor depends on the eigenvalues of the trace-class operator $Q$
definining the $Q$-Wiener process. We expect that new technical tools are needed to answer this question.\medskip 

From the viewpoint of experimental data the exponential scaling for the LDP is relevant as it shows that 
noise-induced transitions have exponential interarrival times. This leads to the possibility that working memory
as well as perceptual bistability could be governed by a Poisson process. However, the same phenomena could also
be governed by a slowly varying variable {i.e.}~by an adaptive neural field \cite{BressloffReview}; 
the `fast' activity variable $U$ in the Amari model is augmented by one or more `slow' variables. In this context,
the required assumptions on the equilibrium structure in Section \ref{sec:deterministic} and the noise in Section \ref{sec:gain_perturb}
are not necessary to produce a bistable switch and the fast variable $U$ can, {e.g.}, just have a single deterministically 
unstable equilibrium and bistable, non-random switching between metastable states may occur. Of course, there is also the  
possibility that an intermediate regime between noise-induced and deterministic escape is relevant \cite{MeiselKuehn}.

It is interesting to note that the same problem arises generically across many natural sciences in the 
study of critical transitions (or `tipping points') \cite{Schefferetal,KuehnCT2}. The question which escape mechanism
from a metastable state matches the data is often discussed very controversially and we shall not aim to provide a
discussion here. However, our main goal to make the LDP and its associated rate functional as explicit as possible
should definitely help to simplify comparison between models and experiment. For example, a parameter study or data assimilation 
for the finite-dimensional Galerkin system considered in Theorem \ref{thm:galerkin_convergence} and the associated 
rate function in Proposition \ref{prop:our_fd_LDP} are often easier than 
working directly with the abstract solutions of the stochastic Amari model in $C([0,T],L^2(\dom))$.
\medskip

To study the parameter dependence is an interesting open question which we aim to address in future work. In particular, the next step 
is to use the Galerkin approximations in Section \ref{sec:Galerkin} and the associated LDP in Section \ref{sec:approximation} for
numerical purposes \cite{KuehnRiedler1}. Recent work for SPDEs \cite{BloemkerJentzen} suggests that a
spectral method can also be efficient for stochastic neural fields. Results on numerical continuation and jump
heights for SDEs \cite{KuehnSDEcont1} can also be immediately transferred to the spectral approximation
which would allow for studies of bifurcations and associated noise-induced phenomena.\medskip

One may also ask how far the technical assumptions we make in this paper can be weakened. It is not clear which
parts of the global Lipschitz assumptions may be replaced by local assumptions or removed altogether. Similar
remarks apply to the multiscale nature of the problem induced by the decay of the eigenvalues of $Q$. How far
this observation can be exploited to derive more efficient analytical as well as numerical techniques remains
to be investigated.\medskip

On a more abstract level it would certainly be desirable to extend our basic framework to other topics that 
have been considered already for deterministic neural fields. A generalization
to activity based models with nonlinearity $f(\int_\cB w(x,y)u(y)dy)$ seems possible. Furthermore, it
may be highly desirable to go beyond stationary solutions and investigate noise-induced 
switching and transitions for travelling waves and patterns.\medskip

\textbf{Acknowledgements:} CK would like to thank the European Commission (EC/REA) 
for support by a Marie-Curie International Re-integration Grant and the Austrian Academy of
Sciences (\"{O}AW) for support via an APART fellowship. We also would like to thank two 
anonymous referees whose comments helped to improve the manuscript.

\medskip

\textbf{Authors' contributions:} Both authors contributed equally to the paper.

\medskip

\textbf{Competing interests:} The authors declare that they have no competing interests.

%%%%%%%%%%%%%%%%%%%%%%%%%%%%%%%%%%%%%%%%%%%%%%%%%%%%%%%%%%%%%%%%%%%%%%%%%%5
\begin{appendix}
\section{Convergence of the Galerkin Approximation}

\begin{proof}[Proof of Theorem \ref{thm:galerkin_convergence}]
We fix a $T>0$. Throughout the proof an unspecified norm $\|\cdot\|$ or operator 
norm $\opnorm\cdot\opnorm$, respectively, are either for the Hilbert space $L^2(\dom)$ 
or the Banach space $C(\dom)$ and estimates using the unspecified notation are valid in 
both cases. Furthermore, $C>0$ denotes an arbitrary deterministic constant which may 
change from line to line but depend only on $T$. We begin the proof obtaining an a-priori 
growth bound on the solution of the Amari equation \eqref{eq:Amari2}. Using the linear 
growth condition on $F$ implied by its Lipschitz continuity we obtain the estimate
\benn
\|U_t\|\leq \e^{-\alpha t}\|U_0\| + C\int_0^t \e^{-\alpha(t-s)} (1+\|U_s\|)\,ds+\|O_t\|\,.
\eenn
Due to Gronwall's inequality there exists a deterministic constant $C$ such that it holds almost surely
\be
\label{eq:apriori_growth_bound}
\sup_{t\in[0,T]}\|U_t\|\leq C\Bigl(1+\|U_0\|+\sup_{t\in[0,T]}\|O_t\|\Bigr)\,\e^{CT}\qquad\textnormal{a.s.}
\ee
Note that $O$ is an Ornstein-Uhlenbeck process and it thus holds $\sup_{t\in[0,T]}\|O_t\|_{L^2}<\I$ 
almost surely and under the assumptions of Lemma \ref{lemma:spatial_continuity} in addition $
\sup_{t\in[0,T]}\|O_t\|_0<\I$ almost surely.

Let $P^N$ denote the projection operator from $L^2(\dom)$ to the subspace spanned by the 
first $N$ basis functions. Then we find that in Hilbert space notation the $N$-th Galerkin 
approximation satisfies
\benn
U^N_t= \e^{-\alpha t}\,P^N U_0 +\int_0^t \e^{-\alpha(t-s)} P^NKF(U^N_t)\,ds+\epsilon\, O^N_t\,.
\eenn
Here we use $O^N$ to be shorthand for the truncated stochastic convolution
\be\label{eq:OU_series}
O^N_t := \sum_{i=1}^N \lambda_i\int_0^t \e^{-\alpha (t-s)}\,d\beta^i_s ~v_i\,.
\ee
Hence we obtain for the error of the Galerkin approximation
\beann
U_t-U^N_t &=& \e^{-\alpha t}(U_0-P^NU_0) + \int_0^t \e^{-\alpha(t-s)} 
\Bigl(KF(U_t)-P^NKF(U^N_t)\Bigr)\,ds\\[1ex]
&&\mbox +\epsilon\,\bigl(O_t-O^N_t\bigr)\,.
\eeann
Adding and subtracting the obvious terms yields for the norm the estimate
\beann
\|U_t-U^N_t\| &\leq& \e^{-\alpha t}\|U_0-P^NU_0\| + 
\opnorm P^N K\opnorm \int_0^t \e^{-\alpha(t-s)} \|F(U_s)-F(U^N_s)\|\,ds\\[1ex]
&&\mbox{} + \opnorm K-P^NK\opnorm\,\int_0^t \e^{-\alpha(t-s)}\|F(U_s)\|\,ds +
\epsilon\,\|O_t-O^N_t\|\,,
\eeann
where $\opnorm P^N K\opnorm_{L^2}\leq\opnorm K\opnorm_{L^2}$ and 
$\sup_{N\in\N}\opnorm P^N K \opnorm_0<\I$ as a consequence of \cite[Lemma 11.1.4]{AtkinsonHan} 
(cf.~the application of this result below). Next, using the Lipschitz and linear growth 
conditions on $F$, applying Gronwall's inequality, taking the supremum over all $t\in[0,T]$ 
and estimating using the bound \eqref{eq:apriori_growth_bound} yield
\bea
\sup_{t\in[0,T]}\|U_t-U^N_t\|&\leq&C\left(\|U_0-P^N U_0\|+
\opnorm K-P^NK\opnorm\,\Bigl(1+\|U_0\|+\sup_{t\in[0,T]}\|O_t\|\Bigr)\right)\nonumber\\[1ex]
&&\mbox{} + C\left(\sup_{t\in[0,T]}\|O_t-O^N_t\|\right).
\eea
It remains to show that the individual terms in the right hand side converge to zero 
for $N\to\I$ almost surely.
\begin{itemize}
\item It clearly holds that $\|U_0-P^NU_0\|_{L^2}\to 0$ and the convergence $\|U_0-P^NU_0\|_0\to 0$ 
 holds by assumption.
\item Next, as argued above $\Bigl(1+\|U_0\|+\sup_{t\in[0,T]}\|O_t\|\Bigr)$ is a.s.~finite 
and the compactness of the operator $K$ implies $\opnorm K-P^NK\opnorm\to 0$ for $N\to\infty$, 
see \cite[Lemma 12.1.4]{AtkinsonHan}.
\item Finally, the third error term $\sup_{t\in[0,T]}\|O_t-O^N_t\|$ vanishes if the Galerkin 
approximations $O^N$ of the Ornstein-Uhlenbeck process $O$ converge almost surely in the 
spaces $C([0,T],L^2(\dom))$ and $C([0,T],C(\dom))$, respectively. This convergence is proven 
in Lemma \ref{res:conv_of_noise_approx} below.
\end{itemize}
The proof is completed.
\end{proof}

The following lemma contains the convergence of the Galerkin approximation of the 
Ornstein-Uhlenbeck process necessary for proving Theorem \ref{thm:galerkin_convergence}.

\begin{lemma}
\label{res:conv_of_noise_approx} There exists a sequence $b_N>0$ with $\lim_{N\to\I} b_N=0$ 
such that for all $T>0$ and all $\delta>0$ there exists a random variable $Z_\delta$ 
with $\E|Z_\delta|^p<\I$ for all $p\geq 1$ such that
\benn
\sup_{t\in[0,T]}\|O_t-O^N_t\|_{L^2}\,\leq\, Z_\delta\,b_N^{1-\delta}
\eenn
almost surely. If, in addition, the series $\sum_{i=1}^\I \lambda_i^2v_i^2$ 
converges in $C(\dom)$ and the functions $v_i$ are Lipschitz continuous with 
Lipschitz constants $L_i$ such that $\sup_x{\in\dom}\sum_{i=1}^\I 
\lambda_i^2 L_i^{2\rho}|v_i(x)|^{2(1-\rho)}<\I$ for a $\rho\in(0,1)$, 
then it further holds that
\benn
\sup_{t\in[0,T]}\|O_t-O^N_t\|_{0}\,\leq\, Z_\delta\,b_N^{1-\delta}
\eenn
almost surely.
\end{lemma}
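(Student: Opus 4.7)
The plan is to identify $R^N_t := O_t - O^N_t = \sum_{i>N}\lambda_i\, Y^i_t\, v_i$, where $Y^i_t := \int_0^t \e^{-\alpha(t-s)}\,d\beta^i_s$ are scalar Ornstein--Uhlenbeck processes, and to rerun the proof of Lemma~\ref{lemma:spatial_continuity} applied to this tail of the stochastic convolution series. A natural candidate for the deterministic rate is
\benn
b_N^2:=\sum_{i>N}\lambda_i^2
\eenn
in the $L^2$ case, and in the $C(\dom)$ case the maximum of this with the tails of the two series in~\eqref{lemma:spatial_continuity_conditions}; both tend to zero by~(H3) and by the hypotheses of the lemma, respectively.

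The first step is to record the second-moment bounds. By orthonormality of the $v_i$ and the temporal H\"older estimate used on the $Y^i$ in the proof of Lemma~\ref{lemma:spatial_continuity},
\benn
\E\,\|R^N_t-R^N_s\|_{L^2(\dom)}^2 \,=\,\sum_{i>N}\lambda_i^2\,\E|Y^i_t-Y^i_s|^2 \,\leq\, C\,b_N^2\,|t-s|^\rho
\eenn
for any $\rho\in[0,1]$. In the $C(\dom)$ case, the pointwise computations of that lemma go through verbatim but with the three series restricted to $i>N$, giving $\E|R^N(x,t)-R^N(y,s)|^2\leq C\,b_N^2(|x-y|^2+|t-s|^2)^{\rho/2}$ for some $\rho\in(0,1)$ coming from the hypothesis.

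The second step is to lift to all moments using the Gaussianity of $R^N_t-R^N_s$ (resp.~of $R^N(x,t)-R^N(y,s)$). Equivalence of moments for centred Gaussians gives, for every $m\in\N$,
\benn
\E\,\|R^N_t-R^N_s\|^{2m}\leq C_m\,b_N^{2m}\,|t-s|^{m\rho},\qquad \E|R^N(x,t)-R^N(y,s)|^{2m}\leq C_m\,b_N^{2m}(|x-y|^2+|t-s|^2)^{m\rho/2}.
\eenn
Applying the Kolmogorov--Centsov Theorem to the rescaled process $b_N^{-1}R^N$ produces a continuous modification on $[0,T]$, respectively on $[0,T]\times\dom$, whose $\gamma$-H\"older constant has $L^p$-norm bounded uniformly in $N$ for every $\gamma<\rho/2$ and every $p\geq 1$. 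Since $R^N_0=0$, this yields $\E\sup_{t\in[0,T]}\|R^N_t\|^p\leq C_{p,T}\,b_N^p$ for all $p$.

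Finally, the precise exponent $b_N^{1-\delta}$ with a single random constant $Z_\delta$ independent of $N$ comes from a Markov--Borel--Cantelli step: for arbitrary $p$,
\benn
\P\bigl(\sup\nolimits_{t\in[0,T]}\|R^N_t\|>b_N^{1-\delta}\bigr)\,\leq\, C_{p,T}\,b_N^{p\delta},
\eenn
so choosing $p$ sufficiently large (possibly after replacing $b_N$ by a polynomially decaying dominating sequence to ensure summability in $N$) gives $Z_\delta := \sup_N b_N^{\delta-1}\sup_{t\in[0,T]}\|R^N_t\|$ with finite moments of all orders. The main obstacle I anticipate is in the $C(\dom)$ case: one has to track that restricting the three series in~\eqref{lemma:spatial_continuity_conditions} to $i>N$ still yields a spatio-temporal increment bound with an exponent arbitrarily close to $\rho/2$, and that all three tail rates can be absorbed into a common scalar $b_N$ without compromising the almost-sure convergence needed to close the proof of Theorem~\ref{thm:galerkin_convergence}.
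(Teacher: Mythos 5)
Your proposal reaches the right conclusion but by a genuinely different mechanism for the central estimate $\E\sup_{t\in[0,T]}\|O_t-O^N_t\|^p\leq C\,b_N^p$. The paper controls the supremum in time by the \emph{factorization method}: it represents $O^N_t-O^M_t$ as a fractional integral in time of the auxiliary process $Y^{M,N}_t=\sum_{i=M+1}^{N}\lambda_i\int_0^t(t-s)^{-\rho/2}\e^{-\alpha(t-s)}\,d\beta^i_s\,v_i$, which converts $\E\sup_t$ into $\sup_t\E$ and reduces everything to fixed-time Gaussian moment computations; the $C(\dom)$ case is then handled by the Sobolev--Slobodeckij embedding $W^{\rho/2,p}(\dom)\hookrightarrow C(\dom)$ for $p>2d/\rho$ rather than by a space--time Kolmogorov--Chentsov argument. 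Your route --- second-moment increment bounds on the tail $R^N$, Gaussian moment equivalence, and Kolmogorov--Chentsov applied to the rescaled process $b_N^{-1}R^N$ --- is a legitimate alternative and arguably more self-contained, since it recycles the computations of Lemma \ref{lemma:spatial_continuity} verbatim on the tail; what the factorization method buys is a clean reduction to pointwise-in-time estimates, so one never has to track how the moments of a H\"older seminorm scale with $N$. Your final Chebyshev--Markov/Borel--Cantelli step is the same as the paper's, which proceeds as in \cite{KloedenNeuenkirch}.

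Two points need repair before your argument closes. First, to get $\E\sup_{t}\|R^N_t\|^p\leq C_{p,T}\,b_N^p$ you must invoke the \emph{quantitative} form of Kolmogorov--Chentsov (Garsia--Rodemich--Rumsey type), which bounds the $p$-th moment of the $\gamma$-H\"older seminorm linearly in the constant of the increment bound; the usual qualitative statement only produces a modification with an a.s.\ finite, $N$-dependent H\"older constant and yields nothing uniform in $N$. This is fixable but it is the load-bearing step and should be stated. Second, your parenthetical about ``replacing $b_N$ by a polynomially decaying dominating sequence'' is backwards: a dominating sequence decays more slowly, and if $b_N\to0$ subpolynomially then no polynomially decaying sequence dominates it. What the Borel--Cantelli step genuinely requires is $\sum_N b_N^{p\delta}<\I$ for some $p$; this holds in the cases of interest (e.g.\ Example \ref{ex:trigonometric_noise}) but does not follow from $b_N\to0$ alone, a restriction the paper inherits as well through its appeal to \cite{KloedenNeuenkirch}. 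Finally, the obstacle you flag in the $C(\dom)$ case is real: the hypothesis only gives $\sup_{x\in\dom}\sum_{i=1}^\I\lambda_i^2L_i^{2\rho}|v_i(x)|^{2(1-\rho)}<\I$, which does not by itself force the supremum of the tail to vanish without a uniform-convergence (Dini-type) argument; the paper's own proof quietly strengthens this to convergence of $\sum_i\lambda_i^2L_i^{2\rho}$ in its final bound \eqref{eq:Galerkin_conf_final_2}, and you would need to impose or derive something equivalent.
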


\begin{remark} 
Assumptions on the speed of convergence of the series 
$\sum_{i=1}^\I \lambda_i^2$ and $\sum_{i=1}^\I \lambda_i^2v_i^2$ and 
$\sup_x{\in\dom}\sum_{i=1}^\I 
\lambda_i^2 L_i^{2\rho}|v_i(x)|^{2(1-\rho)}$ readily yield a rate of convergence 
for the Galerkin approximation due to the definition of the constants $b_N$ 
in the proof of the lemma.
\end{remark}

\begin{proof} As in the proof Theorem \ref{thm:galerkin_convergence} the unspecified 
norm $\|\cdot\|$ denotes either the norm in $L^2(\dom)$ or in $C(\dom)$ and estimates 
are valid in both cases. We fix $T>0$, $\rho\in(0,1)$ and a $p\in\N$ with $p>2d/\rho$. 
Throughout the proof $C>0$ denotes a constant that changes from line to line but depends 
only on the fixed parameters $T,p,\rho,\alpha$ and the domain $\dom\subset\R^d$.

Then we obtain for all $N,M\in\N$ with $M<N$ using the factorization method 
(cf.~\cite[Sec.~5.3]{daPratoZabczyk}) similarly to the proof of \cite[Lemma 5.6]{BloemkerJentzen} 
the estimate
\benn
\bigl(\E\sup\nolimits_{t\in[0,T]}\|O^N_t-O^M_t\|^p\bigl)^{1/p}\,\leq
\,C\,\sup\nolimits_{t\in[0,T]}\bigl(\E\|Y^{M,N}_t\|^p\bigr)^{1/p}\,,
\eenn
where $Y^{N,M}_t$ is the process defined by
\benn
Y^{M,N}_t=\sum_{i=M+1}^N \lambda_i\int_0^t(t-s)^{-\rho/2} \e^{-\alpha(t-s)}\,d\beta^i_s ~v_i\,.
\eenn
In order to estimate the $p$-th mean of the process $Y^{M,N}$ we proceed separately 
for the two cases $L^2(\dom)$ and $C(\dom)$.\medskip

\emph{The case of $L^2(\dom)$}: Due to the orthogonality of the basis functions and 
employing H\"older's inequality one obtains
\beann
\E\|Y^{M,N}_t\|^p_{L^2}&=&\E\left(\sum_{i=M+1}^N \lambda_i^2\,
\Bigl(\int_0^t(t-s)^{-\rho/2} \e^{-\alpha(t-s)}\,d\beta^i_s\Bigr)^2\right)^{p/2}\\[1ex]
&=&\E\left(\sum_{i=M+1}^N \lambda_i^{\frac{2(p-2)}{p}}\,
\Bigl(\lambda_i^{2/p}\int_0^t(t-s)^{-\rho/2} \e^{-\alpha(t-s)}\,d\beta^i_s\Bigr)^2\right)^{p/2}\\
&\leq&\E\left(\left(\sum_{i=M+1}^N \lambda_i^2\right)^{\frac{p-2}{p}}\left(\sum_{i=M+1}^N\Bigl(\lambda_i^{2/p}\int_0^t(t-s)^{-\rho/2} \e^{-\alpha(t-s)}\,d\beta^i_s\Bigr)^p\right)^{2/p}\right)^{p/2}\\
&\leq&\left(\sum_{i=M+1}^N \lambda_i^2\right)^{\frac{p-2}{2}}
\sum_{i=M+1}^N\E\left(\lambda_i^{2/p}\int_0^t(t-s)^{-\rho/2} \e^{-\alpha(t-s)}\,d\beta^i_s\right)^p.
\eeann
Next, as the stochastic integrals in the right hand side are centered Gaussian 
random variables \cite[Lemma 5.2]{BloemkerJentzen}\footnote{For a centred Gaussian 
random variable $Z$ it holds $\E Z^p\leq p! (\E Z^2)^{p/2}$ for all $p\in\N$.} yields for all $t\leq T$
\beann
\E\|Y^{M,N}_t\|^p_{L^2}&\leq&C\,\left(\sum_{i=M+1}^N \lambda_i^2\right)^{\frac{p-2}{2}}
\sum_{i=M+1}^N \lambda_i^2\left(\int_0^t(t-s)^{-\rho} \e^{-2\alpha(t-s)}\,ds\right)^{p/2}\\[1ex]
&\leq& C\,\left(\sum_{i=M+1}^N \lambda_i^2\right)^{\frac{p-2}{2}}\sum_{i=M+1}^N \lambda_i^2\left(\int_0^T s^{-\rho} \e^{-2\alpha s}\,ds\right)^{p/2}\\[1ex]
&\leq& C\,\left(\sum_{i=M+1}^N \lambda_i^2\right)^{p/2}\,.
\eeann
Therefore we obtain for all $M,N\in\N$ with $M<N$
\be
\label{eq:Galerkin_conf_final_1}
\Bigl(\sup\nolimits_{t\in[0,T]}\E\|Y^{N,M}_t\|^p_{L^2}\Bigr)^{1/p}\,\leq
\, C\left(\sum_{i=M+1}^N \lambda_i^2\right)^{1/2}\,\leq\, C\left(\sum_{i=M+1}^\I \lambda_i^2\right)^{1/2},
\ee
where the final upper bound decreases to zero for $M\to\I$ by assumption. \medskip

\emph{The case of $C(\dom)$}: In this case the estimates get a bit more involved. As 
$\rho/2>d/p$ The continuous embedding of the Sobolev-Slobodeckij space $W^{\rho/2,p}(\dom)$ 
into $C(\dom)$ (cf.~\cite[Sec.~2.2.4 and 2.4.4]{RunstSickel}) and 
\cite[Lemma 5.2]{BloemkerJentzen} yield the estimates
\bea
\label{eq:proof_galerkin_lemma}
\sup\nolimits_{t\in[0,T]}\E\|Y^{N,M}_t\|_0^p&\leq& C \sup\nolimits_{t\in[0,T]} 
\int_\dom\int_\dom \frac{\E |Y^{M,N}_t(x)-Y^{M,N}_t(y)|^p}{|x-y|^{d+\rho p/2}} dx ~dy\nonumber\\[1ex]
&&\mbox{} + C \sup\nolimits_{t\in[0,T]} \int_\dom \E|Y^{M,N}(x)|^p dx\nonumber\\[1ex]
&\leq& C \sup\nolimits_{t\in[0,T]} \int_\dom\int_\dom 
\frac{\bigl(\E |Y^{M,N}_t(x)-Y^{M,N}_t(y)|^2\bigr)^{p/2}}{|x-y|^{d+\rho p/2}} dx ~dy\nonumber\\[1ex]
&&\mbox{} + C \sup\nolimits_{t\in[0,T]} \int_\dom \bigl(\E|Y^{M,N}(x)|^2\Bigr)^{p/2} dx\,.
\eea
We proceed estimating the two expectation terms in the right hand side. Then we 
obtain for all $M<N$ and all $x,y\in\dom$ for the first term
\bea
\label{eq:Galerkin_conv_second_estimate}
\E |Y^{M,N}_t(x)-Y^{M,N}_t(y)|^2&=&\E \Big|\sum_{i=M+1}^N \lambda_i\int_0^t(t-s)^{-\rho/2} 
\e^{-\alpha(t-s)}\,d\beta^i_s ~\bigl(v_i(x)-v_i(y)\bigr)\Big|^2\nonumber\\[1ex]
&\leq&\sum_{i=M+1}^N \lambda_i^2 \int_0^T s^{-\rho}\e^{-2\alpha s}\, ds ~|v_i(x)-v_i(y)|^2\nonumber\\[1ex]
&\leq& C \sum_{i=M+1}^N \lambda_i^2L_i^{2\rho}\,|x-y|^{2\rho}
\eea
for any $\rho\in(0,1)$ and for the second term
\be
\label{eq:Galerkin_conv_first_estimate}
\E|Y^{M,N}_t(x)|^2\,\leq\,\sum_{i=M+1}^N \lambda_i^2\int_0^t(t-s)^{-\rho} 
\e^{-2\alpha(t-s)}\,ds\,v_i(x)^2\,\leq\, C\sum_{i=M+1}^N \lambda_i^2v_i(x)^2\,.
\ee
Next applying the estimates \eqref{eq:Galerkin_conv_first_estimate} and 
\eqref{eq:Galerkin_conv_second_estimate} to the right hand side of 
\eqref{eq:proof_galerkin_lemma} yields, note that $\rho p/2-d>0$,
\beann
\lefteqn{\Bigl(\sup\nolimits_{t\in[0,T]}\E\|Y^{N,M}_t\|_0^p\Bigr)^{1/p}}\\[1ex]
&\leq& C \Bigl(\int_\dom\int_\dom \frac{\Bigl(\sum_{i=M+1}^N \lambda_i^2 
L_i^{2\rho} ~|x-y|^{2\rho}\Bigr)^{p/2}}{|x-y|^{d+\rho p/2}} dx ~dy + 
\int_\dom \Bigl(\sum_{i=M+1}^N \lambda_i^2 v_i(x)^2\Bigr)^{p/2} dx\Bigr)^{1/p}\\[1ex]
&\leq& C\biggl(\int_\dom\int_\dom |x-y|^{\rho p/2-d}dx ~dy\, \Bigl(\sum_{i=M+1}^N 
\lambda_i^2 L_i^{2\rho}\Bigr)^{p/2} + \Bigl(\sup_{x\in \dom}\Bigl|\sum_{i=M+1}^N 
\lambda_i^2v_i(x)^2\Bigr| \Bigr)^{p/2}  \biggr)^{1/p}\\[1ex]
&\leq& C\biggl(\sup_{x\in \dom}\Bigl|\sum_{i=M+1}^N \lambda_i^2v_i(x)^2\Bigr| 
+\sum_{i=M+1}^N \lambda_i^2L_i^{2\rho} \biggr)^{1/2}
\eeann
for any $\rho\in(0,1)$. Due to the assumptions of the lemma the two summations 
in the right hand side converge for $N\to\I$ and thus we obtain for all $M,N\in\N$ 
with $M<N$ the estimate
\be
\label{eq:Galerkin_conf_final_2}
\Bigl(\sup\nolimits_{t\in[0,T]}\E\|Y^{N,M}_t\|_0^p\Bigr)^{1/p}\,\leq\, 
C\,\biggl(\sup_{x\in \dom}\Bigl|\sum_{i=M+1}^\I \lambda_i^2v_i(x)^2\Bigr| 
+\sum_{i=M+1}^\I \lambda_i^2L_i^{2\rho} \biggr)^{1/2},
\ee
where the right hand side decreases to zero for $M\to\I$. \medskip

Overall, we infer from the estimates \eqref{eq:Galerkin_conf_final_1} and 
\eqref{eq:Galerkin_conf_final_2} that $O^N$ is a Cauchy-sequence in the two 
spaces $C([0,T],L^2(\dom))$ and $C([0,T],C(\dom))$ with respect to convergence 
in the $p$-th mean and the limit is given by the process $O$. Moreover, it holds that
\be
\label{eq:Galerkin_conv_sup_bound}
\Bigl(\E\sup\nolimits_{t\in[0,T]}\|O_t-O^N_t\|^p\Bigr)^{1/p}\,\leq\, C\,b_N\, \qquad\forall\, N\in\N\,,
\ee
where the constant $C$ depends only on $p$ but is independent of $N$ and 
the sequence $b_N$ is independent of $p$ and $\lim_{N\to \I} b_N=0$. As we 
fixed $p\in\N$ sufficiently large at the beginning of the proof, the result 
\eqref{eq:Galerkin_conv_sup_bound} holds for all sufficiently large $p\in\N$. 
Then, however, Jensen's inequality implies that \eqref{eq:Galerkin_conv_sup_bound} 
holds for all $p\in[1,\infty)$. Proceeding as in the proof of \cite[Lemma 2.1]{KloedenNeuenkirch} 
using the Chebyshev-Markov inequality and the Borel-Cantelli lemma one obtains 
that there exists for all $\delta>0$ a random variable $Z_\delta$ with $\E|Z_\delta|^p<\I$ 
for all $p\geq 1$ such that
\be
\sup\nolimits_{t\in[0,T]}\|O_t-O^N_t\|\,\leq\, Z_\delta\,b_N^{1-\delta}\,\qquad\textnormal{almost surely}.
\ee
The proof is completed.
\end{proof}
\end{appendix}

\bibliographystyle{plain}
\bibliography{./KR}

\end{document}